\documentclass[pdflatex,sn-mathphys-num]{sn-jnl}

\usepackage{graphicx}%
\usepackage{multirow}%
\usepackage{amsmath,amssymb,amsfonts,breqn}%
\usepackage{amsthm}%
\usepackage{mathrsfs}%
\usepackage[title]{appendix}%
\usepackage{xcolor}%
\usepackage{textcomp}%
\usepackage{manyfoot}%
\usepackage{booktabs}%
\usepackage{algorithm}%
\usepackage{algorithmicx}%
\usepackage{algpseudocode}%
\usepackage{listings}%
\newtheorem{corollary}{Corollary}[section]

\theoremstyle{thmstyleone}%
\newtheorem{theorem}{Theorem}
\newtheorem{proposition}[theorem]{Proposition}%

\theoremstyle{thmstyletwo}%
\newtheorem{remark}{Remark}%

\theoremstyle{thmstylethree}%
\newtheorem{definition}{Definition}%

\begin{document}

\title[Article Title]{Finite Volume Einstein Finsler Warped Product Manifolds of Non-positive or Non-negative Scalar Curvature}


\author[2]{\fnm{Mohammad} \sur{Aqib}}\email{mohammadaqib@hri.res.in, aqibm449@gmail.com}

\author*[1]{\fnm{Hemangi} \sur{Madhusudan Shah}}\email{hemangimshah@hri.res.in}
\equalcont{These authors contributed equally to this work.}

\author[3]{\fnm{Pankaj} \sur{Kumar}}\email{pankaj.k@galgotiasuniversity.edu.in}
\equalcont{These authors contributed equally to this work.}

\author[4]{\fnm{Anjali} \sur{Shriwastawa}}\email{anjalishrivastav@bhu.ac.in}
\equalcont{These authors contributed equally to this work.}

\affil*[1,2]{Harish-Chandra Research Institute, A CI of Homi Bhaba National Institute, Chhatnag Road, Jhunsi, Prayagraj-211019, India}

\affil[3]{Department of Mathematics, school of Basic Sciences, Galgotias University, Greater Noida, India}

\affil[4]{DST-CIMS, Banaras Hindu University, Varanasi-221005, India}


\abstract{We partially answer a question by A. L. Besse in the Finslerian settings by investigating the obstructions to the existence of Einstein  warped product manifold, when the base is  
  compact or finite volume complete Riemannian manifold 
 and the fiber is a weakly Berwald Finsler manifold. In case of compact base, we establish that, this Finslerian warped must have dimension greater than $3$, if the Ricci scalar is non-positive or non-negative. We relate various conditions on the scalar curvatures of the base, fiber, and the warped product manifold which imply that the resulting warped product is trivial. We also show that, when Ricci scalar of the warped product is positive, then Ricci scalar of the fiber must be positive, in order that the warped product is non-trivial. Moreover, we prove that, if the base manifold is a complete, Riemannian manifold of finite volume with a bounded warping function, and the Ricci scalar of the warped product is non-positive, then the warped product is trivial. Our findings extend the results presented in Kim and Kim paper entitled, "Compact Einstein warped product spaces with non-positive scalar curvature," published in (2003), and Dan Dumitru's results on
 "On compact Einstein warped products," published in (2011),
 in the Finslerian settings. }

\keywords{Finsler warped product, Finite volume Riemannian manifold,
 Weakly Berwald Finsler manifold, Omori-Yau maximum principle.}



\maketitle

\textbf{AMS classification 2020:}~53B40, 52B60.\\
\tableofcontents

\section{\textbf{Introduction}}\label{Intro}
	The notion of warped product plays an important role in Riemannian geometry,
	moreover in geodesic metric spaces.  The warped product was first introduced by Bishop and O'Neill to study Riemannian manifolds of negative curvature (see \cite{lota}). Warped products have been mainly used to construct  new examples of Riemannian manifolds with prescribed 
	curvature conditions.  This
	construction can be extended for Finslerian metrics with some minor restrictions.
	This is motivated by Asanov's papers \cite{gsa,gsa1}, where some models of relativity theory
	are described through the warped product of Finsler metrics. These metrics are in the form of $(\alpha,\beta)$-metrics, which are the generalization of the Randers metrics; which are being asymmetric Finsler metrics in  four-dimensional space-time. The product was later extended to the 
	warped product case of Finsler manifolds by the work of Kozma, Peter and Varga (see \cite{LPC}). \\
 

This article is devoted to the study of Einstein Finslerian 
warped product manifolds, with base a Riemannian manifold
and fiber a weakly Berwald manifold. It turns out that,
in the case of Finsler manifolds of dimensions greater than $2$, in general,  Schur's Lemma  does not hold true. 
Hence, in this case, the Ricci curvature turns out to be a
scalar function on the manifold (see Definition \ref{Einstein} and description after it). 
This is how Finsler geometry differs from Riemannian one drastically. \\

In his inquiry, A.L. Besse (\cite{bas}, p. $265$)  posed the question: "Does  there exist a compact Einstein warped product with nonconstant warping function?" 
This was answered partially and negatively by 
Kim and Kim who proved in \cite{kim}:
\vspace{0.15in}

\begin{theorem}[\textbf{Theorem 1 \cite[\boldmath p. $2573$]{kim}}]\label{kk}
If  $(M, g)$ is an Einstein Riemannian warped product manifold with non-positive scalar curvature and compact base, then $M$ is simply a Riemannian product space.
 \end{theorem}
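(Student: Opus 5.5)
We argue directly from the Ricci curvature formulas of a warped product $M=B\times_f F$, where $(B,g_B)$ is compact of dimension $m$, $(F,g_F)$ has dimension $k$, and $f>0$ is smooth on $B$. Assume $M$ is Einstein, $\mathrm{Ric}_M=\lambda g$ with constant $\lambda\le 0$ (the scalar curvature is $(m+k)\lambda$). For horizontal vectors $X,Y$ and vertical vectors $U,V$, O'Neill's formulas give
\begin{align*}
\mathrm{Ric}_M(X,Y)&=\mathrm{Ric}_B(X,Y)-\frac{k}{f}\,\mathrm{Hess}f(X,Y),\\
\mathrm{Ric}_M(X,U)&=0,\\
\mathrm{Ric}_M(U,V)&=\mathrm{Ric}_F(U,V)-\Big(\frac{\Delta f}{f}+(k-1)\frac{|\nabla f|^2}{f^2}\Big)f^2 g_F(U,V),
\end{align*}
where $\Delta$ is the Laplacian of $B$ with the sign convention $\Delta=\mathrm{tr}\,\mathrm{Hess}$.

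The vertical equation is the key one. It forces $\mathrm{Ric}_F=\mu g_F$ for a constant $\mu$: the left side and the correction term depend only on the $B$-point, while $\mathrm{Ric}_F$ depends only on the $F$-point. Taking the trace then gives
\[
\lambda f^2 = \mu - f\Delta f-(k-1)|\nabla f|^2 .
\]
We may assume $k\ge 2$ here. When $k=1$, $\mu=0$ automatically, and the equation reduces to $\Delta f=-\lambda f$; integrating over $B$ gives $\lambda\int_B f=0$. If $\lambda<0$ this is impossible. If $\lambda=0$, $f$ is harmonic on a compact manifold, hence constant.

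The main step is to rewrite the identity as a divergence and integrate over compact $B$. Note that $\Delta(f^2)=2f\Delta f+2|\nabla f|^2$, and more generally
\[
\frac{1}{k}\Delta(f^k)=f^{k-1}\Delta f+(k-1)f^{k-2}|\nabla f|^2 .
\]
Multiplying the identity by $f^{k-2}$ therefore gives
\[
\lambda f^k=\mu f^{k-2}-\tfrac1k\Delta(f^k).
\]

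Next we evaluate this at the extrema of $f$. At a maximum point $p$ of $f$ we have $\nabla f=0$ and $\Delta f\le 0$, so $\lambda f(p)^2=\mu-f\Delta f\ge\mu$. At a minimum point $q$ we have $\lambda f(q)^2\le\mu$. Combining these,
\[
\lambda f_{\max}^2\ \ge\ \mu\ \ge\ \lambda f_{\min}^2 .
\]
If $\lambda<0$, then $f_{\max}^2\le f_{\min}^2$, so $f$ is constant. This is the expected obstacle, and the extremum argument resolves it cleanly without any integration.

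It remains to treat $\lambda=0$. The inequalities give $\mu=0$, so $f\Delta f+(k-1)|\nabla f|^2=0$, that is, $\Delta(f^k)=0$. By compactness $f^k$ is constant, hence $f$ is constant.

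With $f$ constant, the metric $g_B+f^2g_F$ is a Riemannian product after rescaling the fiber metric, which is the claim.
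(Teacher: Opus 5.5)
Your proof is correct and follows essentially the same route as the paper, which reproves Kim--Kim's theorem through Theorem \ref{mainf}: evaluating the traced fiber equation at a maximum and a minimum of $f$ gives $\lambda f_{\max}^2\ge\mu\ge\lambda f_{\min}^2$, which forces $f$ to be constant when $\lambda<0$ and forces $\mu=0$ when $\lambda=0$. Your identity $\Delta(f^k)=k f^{k-2}\bigl(f\Delta f+(k-1)|\nabla f|^2\bigr)=0$ makes explicit the final constancy step that the paper only asserts from the sign of $f\Delta f$, and your separate $k=1$ case corresponds to the paper's Proposition \ref{prop5.1}.
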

 
 \vspace{0.15in}
 
Also towards the aforementioned question by Besse,
Dan Dumitru \cite{dam} showed that the fiber of a compact Einstein warped product should have strictly positive scalar curvature, in order that the warped product is non-trivial. And furthermore, he explored the new obstructions to the existence of such spaces
(see Theorem $2.5$ of \cite{dam} for more details).
In particular, it is also observed that a compact Einstein warped product must have a dimension greater than $5$
\cite{dam}.\\

We can extend the Besse question in the Finslerian settings as follows:\\
{\bf Question by A.L. Besse in the Finslerian settings:}
Does  there exist a finite volume Einstein Finsler warped product space with a nonconstant warping function? \\

In this article, we further provide partial negative   answers to Besse's question in the Finslerian settings by demonstrating that, if  $M$ is an Einstein Finslerian warped product of constant non-positive Ricci 
(non-positive Ricci scalar),
 having base a finite  volume complete Riemannian manifold
 (compact Riemannian manifold) and fiber a weakly Berwald Finsler manifold, then 
the resulting manifold $M$ is a weakly Berwald
Finslerian Riemannian product  manifold. In particular, we
 also obtain the Finslerian generalization of  Theorem
 \ref{kk}. See our Theorem \ref{main} and Theorem \ref{mainf}
 proved in \S\ref{sec5}.\\ 
We also answer Besse's question partially negatively,
by exploring obstructions in terms of scalar curvatures 
of base (a compact manifold) and fiber manifold, to the existence of an Einstein Finslerian warped product of positive Ricci scalar function. Thus we also 
extend Theorem $2.5$ of \cite{dam}. See Theorem \ref{thm4.2}
of \S\ref{sec6}.  \\
We also observe that, the dimension of the Einstein Finslerian warped product space with compact base, in case when Ricci scalar 
function is positive or negative, must be at least $4$
(Proposition \ref{prop5.1} of  \S\ref{sec5} ). \\

It is well known that, the Finsler manifolds have 
several non-equivalent volume measures.
To study Einstein Finslerian warped product of 
finite volume, we first establish  their admissible class of 
warped product metrics, by affirming that the warped product
of Finsler manifolds of finite Holmes-Thompson volume,
maximum volume or minimum volume, must be of finite volume
in the same category (Theorem \ref{ht} Theorem \ref{max} of \S\ref{sec3}).\\

The article is divided into $6$ sections. In Section \ref{prelim},
we discuss the basic notions required to prove our main results.
In Section \ref{sec3}, we discuss the formulation of Einstein Finsler 
warped product manifolds and the admissible class of such manifolds
of finite well-known volumes (as mentioned above). 
In Section \ref{sec4}, we show that the warped product of 
two weakly Berwald Finsler manifolds is again weakly Berwald, if the warping function is constant (Corollary \ref{cor4.2}). Section \ref{sec5} and Section \ref{sec6}, respectively, deal with the investigation of Einstein Finslerian warped product of non-positive  and non-negative scalar curvatures, respectively.

	\section{\textbf{Preliminaries}}\label{prelim}
	In this section, we provide some basic notions required to prove our main result.
	Let $M$ be an $n$-dimensional smooth manifold and let $TM:= {\sqcup _{x \in M}}T_xM$ denote the tangent bundle of $M$. Let $TM^0:=TM\backslash\{0\}$
	be the slit tangent bundle of $M$
 and correspondingly we also express
 $T_{x} M^0:=T_{x} M \backslash\{0\}$ as slit tangent space
 at $x \in M$. \\

We first begin with the definition of a Finsler metric.

	\vspace{0.15in}

	\begin{definition}[{Finsler metric \cite[\boldmath p. $12$]{SSZ}}]
		\textnormal{A {\it Finsler metric} on $M$ is a  continuous function $F:TM \to [0,\infty)$ satisfying the following conditions:
			\begin{itemize}
				\item[(a)] $F$ is smooth on $TM^{0}$, 
				\item[(b)] $F$ is a positively 1-homogeneous on the fibers of tangent bundle $TM$,
				\item[(c)] The Hessian of $\frac{1}{2}F^2$ with element $g_{ij}=\frac{1}{2}\frac{\partial ^2F^2}{\partial y^i \partial y^j}$ is positive definite on $TM^0$.
			\end{itemize}
			The pair $(M,F)$ is called a {\it Finsler manifold} and $g_{ij}$ is called the fundamental metric tensor.}
	\end{definition}

\vspace{0.15in}

The notion of Finslerian geodesics extends to that of the
Riemannian geodesics.  The Finslerian geodesic equation
involves the spray coefficients which generalize the Riemannian Christoffel symbols. 

\vspace{0.15in}

\begin{definition}[{Finsler geodesic \cite[\boldmath p. $78$]{SSZ}}]
		\textnormal{A smooth curve  in a Finsler manifold $(M,F)$ is a {\it geodesic}, if it has constant speed and is locally length minimizing. Thus, a geodesic in a Finsler manifold $(M,F)$ is a curve
			$\gamma:I=(a,b)\rightarrow M$ with $F(\gamma(t), \dot{\gamma}(t))=$ constant and for any $t_0\in I$, there is a small number $\epsilon > 0$, such that
			$\gamma$ is length minimizing on $[t_0-\epsilon,t_0+\epsilon]\cap I$.}
	\end{definition}
 
\vspace{0.15in}

\begin{definition}[{Geodesic equation, Spray coefficients  \cite[\boldmath p. $78, 79$]{SSZ}}]
 
	A smooth \\curve $\gamma$ in a Finsler manifold $(M,F)$ is a geodesic, if and only if in local coordinates $\gamma(t)=(x^i(t))$ satisfies the following second order non linear differential equation:
	\begin{align}\label{2.2a}
		\frac{d^2x^i(t)}{dt^2}+{G}^i\left(x^i,\frac{d x^i}{dt}\right)=0, \qquad 1 \le i \le n,
	\end{align}
	where $G^i=G^i(x,y)$ are local functions on $TM$ defined by
	\begin{align}\label{eq1.2}
		{G}^i=\frac{1}{4}{g}^{i{\ell}}\left\{\left[{F}^{2}\right]_{x^ky^{\ell}}y^k-	\left[{F}^{2}\right]_{x^{\ell}}\right\}.
	\end{align}
	The coefficients $G^i$ are called the {\it spray coefficients} and the quantity $G:=y^i\frac{\partial}{\partial x^i}+ G^i\frac{\partial}{\partial y^i}$ is called {\it spray} on $M$.
\end{definition}

\vspace{0.15in}	

Now we discuss the notion of Riemann curvature tensor in the Finslerian settings in order to define the flag and the Ricci curvature.

\vspace{0.15in}	

\begin{definition}[\textbf{The Riemann curvature tensor \cite[\boldmath p. $50$]{CXSZ}}] \label{def2}  
 	Let $(M^n,F)$ be a Finsler manifold. The Riemann curvature tensor on $M$ is a mapping $R={{R}_\xi:T_xM \rightarrow T_xM}$ given by
 ${R}_{\xi}(u)={R}^{i}_{k}(x,\xi)u^{k} \frac{\partial}{\partial x^i}$,
 	\ $u=u^k\frac{\partial}{\partial x^k}$,\ where ${R}^{i}_{k}={R}^{i}_{k}(x,\xi)$ denote the coefficients of the Riemann curvature tensor of the Finsler metric $F$ and are given by
 	\begin{equation}\label{eqn2.2.10}
 			{R}^{i}_{k}=2\frac{\partial{G}^i}{\partial	x^k}-\xi^j\frac{\partial^2{G}^i}{\partial x^j\partial	\xi^k}+2{G}^j\frac{\partial^2 {G}^i}{\partial \xi^j\partial	\xi^k}-\frac{\partial{G}^i}{\partial \xi^j}\frac{\partial {G}^j}{\partial \xi^k},
 	\end{equation}
  for $1\leq i,j,k\leq n.$
 	\end{definition}

\vspace{0.15in}

 \noindent	The flag curvature $K=K(x,\xi,P)$, extends the concept of sectional curvature from Riemannian to Finsler geometry, independent of whether employing the Berwald, Chern, or Cartan connection.

\vspace{0.15in}

\begin{definition}[\textbf{Flag Curvature \cite[\boldmath p. $98$]{SSZ}}]

    Let $P \subset T_{x} M$ be a tangent plane. For a vector $y \in P \backslash\{0\}$, define

$$
\mbox{K}(P, y):=\frac{\mbox{g}_{y}\left(\mbox{R}_{y}(u), u\right)}{\mbox{g}_{y}(y, y) \mbox{g}_{y}(u, u)-\mbox{g}_{y}(y, u) \mbox{g}_{y}(y, u)},
$$

where $u \in P$ such that $P=\operatorname{span}\{y, u\}$. We can easily show that $\mbox{K}(P, y)$ is independent of $u \in P$ with $P=\operatorname{span}\{y, u\}$. The number $\mbox{K}(P, y)$ is called the flag curvature of the flag $(P, y)$ in $T_{x} M$.
\end{definition}







\vspace{0.15in}	

\begin{definition}[\textbf{The Ricci curvature \cite[\boldmath p. $98$]{SSZ}}]
Define

$$
{\operatorname { R i c }}(y):=\sum_{i=1}^{n} R_{i}^{i}(y).
$$

  Ric is a scalar function on $TM^{0}$, which satisfies the following homogeneity property:

$$
{\operatorname { R i c }}(\lambda y)=\lambda^{2} 
{\operatorname { R i c }}(y), \quad \lambda>0 .
$$

We call Ric the Ricci curvature. 
\end{definition}

\vspace{0.15in}

The notion of Einstein  Finsler manifolds have been studied  extensively, see for eg., \cite{xz1}.

\vspace{0.15in}	

	\begin{definition} [\textbf{Einstein manifold \cite[\boldmath p. $118$]{SSZ}}] \label{Einstein}
		Let $(M^n,F)$ be a Finsler manifold of dimension $n$, then $M$ is said to be an {\it Einstein} manifold if  
                 $$\operatorname{Ric}(x,y)=\lambda(x) F^2 (x,y).$$
        By differentiating twice with respect to $y_i$ and $y_j$, we infer        
		\begin{equation}
			\operatorname{Ric}_{ij}=\lambda(x)g_{ij},
		\end{equation}
		where $\lambda$ is a scalar function on $M$.
$(M,F)$ is said to be of  Ricci constant, if the function  $\lambda$ is constant.  
	\end{definition}
 
\vspace{0.15in}	
 For  Riemannian manifolds of dimension $\geq 3$,   Schur's lemma ensures that every Riemannian Einstein metric is necessarily of constant
	Ricci curvature. However, it is {\it not known} whether such a Schur 
	lemma holds for Einstein Finsler metrics in general.  But, if we restrict 
	Finsler metrics  to those of Randers type, then Schur's lemma holds for dimension $\geq 3$. See \cite{boa}, p. $216$, for more details.\\




 
Finsler manifolds differ drastically
from Riemannian manifolds and 
 have rich geometry. Distortion, $S$-Curvature,
 Berwald curvature, $E$-curvature etc., measure how far a Finsler manifold is away from being Riemannian. 
 We need these quantities for our geometric analysis and now we are probing into them.
\vspace{0.15in}	

 {\it A Finsler measure space},
 $(M^n,F,d\mu)$ is a Finsler manifold equipped with volume 
 as measure.

\begin{definition}[\textbf{Distortion,  \cite[\boldmath p. $117$]{SSZ}}]
     Suppose $(M^n,F,d\mu)$ is a Finsler measure space. 
  Let $\{b_i\}_{i=1}^n$ be an arbitrary basis for $T_xM$ 
  and $\{\theta^i\}_{i=1}^n$ be its dual basis for $T_x^\ast M$. Then we can write the volume form as $d\mu ={\sigma}_{\mu}(x) \; \theta^1\wedge...\wedge\theta^n $. 
  For a vector $y \in T_xM_0$, the distortion of a Finsler metric $F$ with measure $\mu$ is defined by 
     $$\tau_{\mu}(x,y):=\ln \frac{\sqrt{\operatorname{det}(g_{ij}(y))}}{\sigma_{\mu}(x)},$$
where $g_{ij}(y):=g_y(b_i,b_j).$
 \end{definition}
 
 \vspace{0.15in}

\begin{definition}[\textbf{S-Curvature \cite[\boldmath p. $118$]{SSZ}}]
$(M^n,F,d\mu)$ be a Finsler measure space.
    For $x\in M$ and a vector $y\in T_x M_0$, let $\gamma=\gamma(t)$ be the geodesic with $\gamma(0)=x$ and $\dot{\gamma}(0)=y$. Then
    $$S_{\mu}(x,y):=\frac{d}{dt}\left[\tau(\gamma(t),\dot{\gamma}(t))\right]\Bigr|_{t=0},$$
is called the $S$-curvature of the Finsler metric $F$
with distortion ${\tau}_{\mu}$.
\end{definition}

 \vspace{0.15in}	
 
	\begin{definition}[\textbf{The Berwald curvature tensor \cite[\boldmath p. $127$]{FMSBC}}]\label{Ber}
		Let $(M^n,F)$ be a Finsler manifold.  For a non-zero tangent vector $y\in T_xM$, in local coordinate system, define $B_y:T_xM \times T_xM \times T_xM \rightarrow T_xM$  by
			\begin{align}
				B_y(u,v,w):=B^i_{jk{\ell}}(x,y)u^jv^kw^{\ell}\frac{\partial}{\partial x^i},
			\end{align}
   where       
   \begin{align}
				B^i_{jk{\ell}}=[G^i]_{y^jy^k y^{\ell}},\quad  u=u^i\frac{\partial}{\partial x^i},\quad v=v^i\frac{\partial}{\partial x^i}, \quad w=w^i\frac{\partial}{\partial x^i},
			\end{align}
                for $1\leq i,j,k,l\leq n.$  Then 
			$B=\left\lbrace B_y|\ y\in TM^0  \right\rbrace  
			$ is called the 
   {\it Berwald curvature tensor} and the corresponding 
   coefficients $B^i_{jk{\ell}}$ are called as
{\it Berwald curvature}. 
   
	\end{definition}

A Finsler metric is called {\it Berwald metric}, if 
   $B \equiv 0.$

\vspace{0.15in}	

\begin{definition}[\textbf{E-Curvature \cite[\boldmath p. $118$]{SSZ}}]
     Let $$E_{ij} :=\frac{1}{2}S_{y^iy^j}=\frac{1}{2}\left[\frac{\partial G^s}{\partial y^s}\right]_{y^iy^j}.$$
     The symmetric tensor $E:=E_{ij}(x,y)\mathrm{d}x^i\otimes \mathrm{d}x^j $ is called the $E$ curvature tensor. $E=\{E_y|y\in TM^0\}$ is called the $E$-curvature or the mean Berwald curvature.
\end{definition}

A local formula for $E$-curvature is given by \cite{SSZ},

$$E_y(u,v)=\frac{1}{2}S_{y^iy^j}(y)u^iv^j=\frac{1}{2}\frac{\partial^3G^\gamma}{\partial y^i \partial y^j \partial y^\gamma}(y)u^iv^j=\frac{1}{2}B^{\gamma}_{ij\gamma},$$
 where $E_y(u,v):=E_{ij}(x,y)u^iv^j, $ $u=u^i\partial_i$, $v=v^i\partial_i$. \\

Finsler manifolds with mean Berwald curvature zero are
also called as {\it weakly Berwald} Finsler manifolds.
It should be noted that clearly, if $S =0$, then $E = 0$.
Moreover, the following result is well-known.

\vspace{0.15in}	

\begin{proposition}[Proposition 5.1.2 \textbf{\cite[\boldmath p. $90$]{VCRFG}}]
    For any Berwald metric, the $S$-curvature vanishes, $S \equiv 0$.
\end{proposition}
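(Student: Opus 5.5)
The plan is to show that for a Berwald metric the spray coefficients are quadratic in $y$. Once that is in hand, the $S$-curvature reduces to the derivative along geodesics of a quantity that depends only on position, and a Riemannian comparison metric makes that derivative vanish.

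First, since $B^i_{jk\ell}=[G^i]_{y^jy^ky^\ell}\equiv 0$ and $G^i$ is positively $2$-homogeneous and smooth on $TM^0$, each $[G^i]_{y^jy^k}$ is constant in $y$ on each slit fiber. We may therefore write $G^i(x,y)=\frac12\Gamma^i_{jk}(x)y^jy^k$, with $\Gamma^i_{jk}$ smooth functions of $x$ alone. In other words, the spray of $F$ is that of a torsion-free linear connection $\nabla$. The key classical input is then Szab\'o's theorem: for a Berwald metric, $\nabla$ is the Levi-Civita connection of some Riemannian metric $h$ on $M$. Equivalently, parallel transport of $\nabla$ preserves $F$, and one can average $F^2$ over the holonomy-invariant unit balls to obtain such an $h$. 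This is the step I expect to be the main obstacle, and I would simply cite it, because the remaining argument only needs that $\nabla$ is metric for some $h$ and preserves $F$.

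Next, I would compute the $S$-curvature with respect to the Busemann--Hausdorff measure, which is the setting of the cited proposition. Since $\nabla$-parallel transport $P_t$ along any curve is a linear isometry $(T_{\gamma(0)}M,F)\to(T_{\gamma(t)}M,F)$ and also an $h$-isometry, it preserves both the $F$-unit ball and the $h$-unit ball. Hence the ratio $\sigma_{BH}(x)/\sqrt{\det h(x)}$ is constant along curves, and therefore constant on $M$ when $M$ is connected.

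Along a geodesic $\gamma$, the velocity $\dot\gamma$ is $\nabla$-parallel, because the geodesics of $F$ are the geodesics of $\nabla$. The transport $P_t$ carries $g_{\dot\gamma(0)}$ to $g_{\dot\gamma(t)}$, since $F$ is preserved and $P_t$ is linear. Thus in a $\nabla$-parallel frame $\det g_{ij}(\dot\gamma(t))$ is constant. The same holds for $\det h$ in that frame, and $\sigma_{BH}$ is a constant multiple of $\sqrt{\det h}$. The distortion $\tau(\gamma(t),\dot\gamma(t))$ is independent of the chosen basis because it is a ratio of densities, so it is constant in $t$, and $S\equiv0$ follows.

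As an alternative avoiding Szab\'o, one could use the formula $S=\partial G^m/\partial y^m - y^m\partial_{x^m}\ln\sigma_\mu=y^m\bigl(\Gamma^k_{km}(x)-\partial_m\ln\sigma_\mu\bigr)$. One would then show that $\sigma_{BH}$ is $\nabla$-parallel, which again follows from $\nabla$-parallel transport preserving $F$. This fact can be derived directly, by differentiating $F(P_t v)$ along a curve and using $G^i$ quadratic together with the formula \eqref{eq1.2}.
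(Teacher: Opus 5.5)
The paper gives no proof of this statement. It is quoted from the literature as background, so there is no internal argument to compare against. Judged on its own, your proof is correct (for $n\ge 2$, see below) and is essentially the standard one. The spray is quadratic, and parallel transport of the associated linear connection is a linear isometry of $F$. Hence the Busemann--Hausdorff density is parallel, i.e.\ $\partial_k\ln\sigma_{BH}=\Gamma^m_{mk}$, and $S=\Gamma^m_{mk}y^k-y^k\partial_k\ln\sigma_{BH}=0$.

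\textbf{Szab\'o and $h$ are not needed.} All you actually use is that $P_t$ is a linear isometry of $F$. Given this, take a $P_t$-parallel frame $\{b_i(t)\}$ along the geodesic. Both $g_{ij}(\dot\gamma(t))$ and the coordinate unit ball $\{\xi : F(\gamma(t),\xi^i b_i(t))<1\}$ are then independent of $t$, so $\tau(\gamma(t),\dot\gamma(t))$ is constant directly.

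The isometry property itself is elementary, as your last paragraph anticipates. For every Finsler metric, \eqref{eq1.2} together with homogeneity gives $[F^2]_{x^k}=\frac{\partial G^m}{\partial y^k}[F^2]_{y^m}$. Once $\frac{\partial G^m}{\partial y^k}=\Gamma^m_{kj}y^j$, this is exactly $\frac{d}{dt}F^2(c(t),P_tv)=0$. So the input you flagged as the main obstacle can be replaced by a short computation.

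The argument then works verbatim for the Holmes--Thompson, maximum and minimum measures used elsewhere in the paper. More generally it works for any volume form determined pointwise by $F_x$ and invariant under linear isometries. It does not work for an arbitrary $\mu$: $d\mu=e^{\phi}dV_g$ on a Riemannian manifold gives $S=-d\phi(y)$. So restricting to Busemann--Hausdorff, as you did, is necessary.

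\textbf{The step $n\ge 2$ is genuinely needed.} Concluding that $[G^i]_{y^jy^k}$ is independent of $y$ uses connectedness of $T_xM\setminus\{0\}$, i.e.\ $n\ge 2$. With the paper's definition ($B\equiv 0$) this restriction cannot be dropped, because in dimension one $B\equiv 0$ holds for every metric. For example, on $\mathbb{R}$ take $F(x,y)=e^x y$ for $y>0$ and $F(x,y)=|y|$ for $y<0$. Then $\sigma_{BH}=2e^x/(1+e^x)$ and $S(x,y)=ye^x/(1+e^x)\neq 0$ for $y>0$. You should state $n\ge 2$ explicitly, or define Berwald as having quadratic spray.
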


\vspace{0.15in}	

  There are numerous volume forms in Finsler geometry
 unlike in the Riemannian case. We will be working with the well-known volume forms, the Busemann Hausdorff, Holmes-Thompson, maximum and minimum. Now we define them.\\
 
Let $\mathbb{B}^n(1)$ denote the Euclidean unit ball in $\mathbb{R}^n$ and  ${\operatorname{Vol}(\mathbb{B}^n(1))}$  denote the canonical volume of the Euclidean ball.


\begin{definition}[{Busemann-Hausdorff Volume form \cite[\boldmath p. $22$]{SSZ}}]
 The \textit{Busemann-Hausdorff} volume form on a Finsler manifold $(M^n,F)$ is defined as:
  $\operatorname{dV}_{BH}=\sigma_{BH}(x)$ $\operatorname{dx}$, where
 \begin{equation}
 \sigma_{BH}(x)=\frac{\emph{Vol} (\mathbb{B}^n(1))}{\textnormal{Vol} \left\lbrace (\xi^i)\in T_xM: F(x,\xi)<1\right\rbrace}.
 \end{equation}
 \end{definition}
 
\vspace{0.15in}	
	
\begin{definition}
[\textbf{Holmes-Thompson Volume form \cite[\boldmath p. $6$]{sc}}] 
For a Finsler manifold $(M^n,F)$, the {\it Holmes-Thompson} volume form is defined as $\operatorname{dV}_{HT}=\sigma_{HT}(x)$ $\operatorname{dx}$, where 
            \begin{equation}\label{2.02}
				\sigma_{HT}(x)=\frac{1}{\operatorname{Vol}({\mathbb{B}}^n(1))}\int_{F(x,y)<1}\det(g_{ij}(x,y))\operatorname{dy}.
			\end{equation}
\end{definition}

   \vspace{0.15in}	
   
   \begin{definition}[\textbf{Max and Min Volume forms \cite[\boldmath p. $727$]{BYW}}]
		\textnormal{The {\it maximum} volume form $\operatorname{dV}_{max}$ and the {\it minimum} volume form $\operatorname{dV}_{min}$ of a Finsler metric $F$ is defined as,
			\begin{equation}\label{maxmin}
				\operatorname{dV}_{\max}=\sigma_{\max}(x) \operatorname{dx}, \quad  \operatorname{dV}_{\min}=\sigma_{\min}(x) \operatorname{dx},
			\end{equation}
			where $\sigma_{\max}(x)=\max\limits_{y \in I_x}\sqrt{\det(g_{ij}(x,y))}$ \quad and \quad $\sigma_{\min}(x)=\min\limits_{y\in I_x}\sqrt{\det(g_{ij}(x,y))}$;  \\$I_x=\left\lbrace y\in T_xM| F(x,y)=1\right\rbrace$ is the indicatrix of the Finsler metric $F$ at the point $x$.}
\end{definition}

\vspace{0.15in}

	\section{\textbf{Admissible class of Einstein Finsler warped product manifolds of finite volume}}\label{sec3}
In this section, we explore the concept of the warped product of two Finslerian manifolds, which is a generalization of the analogous Riemannian concept of warped product manifold. Then we 
recall the relationships between  Ricci curvatures and scalar curvatures of the warped product, its base and fiber manifold. Our main theorems rely heavily on these computations. See subsection
\ref{sec3.1}. Further, in subsection \ref{sec3.2}, we establish that 
Finsler warped products of finite Holmes-Thompson/maximum/minimum volume are of finite volume in the same category. Finally, in  subsection 
\ref{sec3.3}, we confirm the existence of Einstein Finsler warped
product manifolds of finite volumes, in the aforementioned categories, thereby establishing the admissible class
of  Einstein  Finsler warped product manifolds of finite volume.

  \subsection{ Einstein Finslerian warped product manifolds}\label{sec3.1}

   We begin with the definition of warped product Finsler manifolds.
	
\begin{definition} [\textbf{Warped product Finsler manifolds \cite[\boldmath p. $10$]{wlt}}]
  On the product manifold $M=M_1\times M_2$, we consider a function on $TM_1^0\times TM_2^0$, where $TM_i^0=TM_i\backslash \{0\}$, $i=1,2$, defined as
	\begin{equation}\label{FM}
		F^2(x_1,y_1,x_2,y_2)=F_1^2(x_1,y_1)+f^2(x_1)F_2^2(x_2,y_2),
	\end{equation}
	for $(x_1,y_1,x_2,y_2)\in TM_1^0\times TM_2^0$. The function $F$ is smooth and positively homogeneous. For the function $F$, we have
	\begin{align}
		&\nonumber\mathbf{g}_{ab}(x_1,y_1,x_2,y_2)=\Bigg(\frac{1}{2}\frac{\partial^2F^2(x_1,y_1,x_2,y_2)}{\partial \mathbf{y}^a\partial\mathbf{y}^b} \Bigg)\\&\quad\quad\quad\quad\quad\quad\quad\quad=\Bigg(\begin{matrix}
			g_{ij}(x_1,y_1) & 0\\
			0 & f^2(x_1)g_{\alpha\beta}(x_2,y_2)
		\end{matrix}\Bigg),
	\end{align}
	where 
	\begin{equation*}
		g_{ij}(x_1,y_1)=\frac{1}{2}\frac{\partial^2F_1^2(x_1,y_1)}{\partial y_1^i\partial y_2^j},\quad \quad  g_{\alpha\beta}(x_2,y_2)=\frac{1}{2}\frac{\partial^2F_2^2(x_2,y_2)}{\partial y_1^\alpha\partial y_2^\beta},
	\end{equation*}
	and $\mathbf{y}^a=(y_1^i,y_2^\alpha),\:\mathbf{y}^b=(y_1^j,y_2^\beta),\:\mathbf{g}_{ij}=g_{ij},\:\mathbf{g}_{\alpha\beta}=g_{\alpha\beta}$. The set of indices satisfy:
	\begin{align*}
		&i,j,k,...\in\{1,2,...,n_1\},\quad \alpha,\beta,\gamma,...\in\{1,2,...,n_2\},\quad a,b,c,...\in\{1,2,...,n_1+n_2\}.
	\end{align*}
	Thus $(M, F)$ defines a Finsler manifold and is called the {\it Finsler warped product} of  $M_1$ and $M_2$, and is denoted by $M=M_1\times_f M_2.$  $M_1$ is called  the base manifold and $M_2$ is called as the fiber manifold.  

 \end{definition}
	
\bigskip	
	
{\bf Note:} The warped product is called as {\it trivial}, if the warping function is constant. In this case, the product will
be termed as {\it Riemannian product} of two Finsler manifolds.\\

The following result by Tokura et al., \cite{wlt},
 which formulates the Ricci curvature of the warped product of Finslerian manifolds lays the foundation for our 
 research in this paper.
 In what follows, we will use   $\Delta f= - \mbox{tr}_g H^f=- \mbox{div}(\nabla f)$, where $H^f$ is the Hessian of the function $f$.
 
 \vspace{0.15in}
 
	\begin{proposition}[Proposition 3.6 \textbf{\cite[\boldmath p. $12$]{wlt}}]\label{f101}
		Let $M=M_{1}^{n_1}\times_{f}M_{2}^{n_2}$ be a Finslerian warped product manifold. If $(M_{1}^{n_1},g_1)$ is a Riemannian manifold, then
		\begin{align}
			 \operatorname{Ric}_{ij}&=(\operatorname{Ric}^{M_1})_{ij}-\frac{{n_2}}{f}\bigg(\frac{\partial^{2}f}{\partial x_1^i\partial x_2^j}-G_{ij}^k \frac{\partial f}{\partial x_1^k}\bigg),\label{f2}\\
			\operatorname{Ric}_{i\alpha}&=0,\label{f3}\\
				\operatorname{Ric}_{\alpha \beta}&= \operatorname{Ric}_{\alpha\beta}^{M_2}+\frac{1}{f} y_1^h\frac{\partial f}{\partial x_1^h}G_{\alpha\beta\gamma}^{\gamma}-[- f\Delta f + (n_2 - 1) g_1(\nabla f,\nabla f)]g_{\alpha\beta},\label{f4}
		\end{align}
		where $\nabla f$ and $\Delta f$ represent the gradient and the Laplacian of $f$ on the base manifold respectively, and $(\operatorname{Ric}^{M_1})_{ij}$ is the component of the Riemannian  Ricci curvature on $M_1$.
	\end{proposition}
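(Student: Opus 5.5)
We plan to prove the formulas of Proposition \ref{f101} by a direct coordinate computation. Take adapted coordinates $(x_1^i,x_2^\alpha)$ on $M_1\times M_2$ with fiber coordinates $(y_1^i,y_2^\alpha)$. Then compute the spray coefficients of $F^2=F_1^2+f^2(x_1)F_2^2$ from \eqref{eq1.2}, and insert them into the trace of \eqref{eqn2.2.10} to get $\operatorname{Ric}=R^a_a$. Finally, differentiate $\frac12\operatorname{Ric}$ twice in $\mathbf y$ to obtain $\operatorname{Ric}_{ab}$.

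\textbf{Step 1: spray coefficients.} The metric $\mathbf g_{ab}$ is block diagonal, $F_1^2=g_1(x_1)(y_1,y_1)$ is Riemannian, and $f$ depends only on $x_1$. From these facts, \eqref{eq1.2} should give
\begin{align*}
\mathbf G^i&=G_1^i(x_1,y_1)-\tfrac12 F_2^2(x_2,y_2)\, f\, g_1^{ik}\frac{\partial f}{\partial x_1^k},\\
\mathbf G^\alpha&=G_2^\alpha(x_2,y_2)+\frac{1}{f}\,y_1^h\frac{\partial f}{\partial x_1^h}\, y_2^\alpha .
\end{align*}
Here $G_1^i=\frac12\Gamma^i_{jk}y_1^jy_1^k$ is quadratic in $y_1$. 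Thus $\partial^2\mathbf G^i/\partial y_1^j\partial y_1^k=\Gamma^i_{jk}=G^i_{jk}$, which is what enters \eqref{f2}.

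\textbf{Step 2: Ricci scalar.} Substitute these into $\operatorname{Ric}=\sum_a\big(2\partial_{x^a}\mathbf G^a-\mathbf y^b\partial_{x^b}\partial_{\mathbf y^a}\mathbf G^a+2\mathbf G^b\partial_{\mathbf y^b}\partial_{\mathbf y^a}\mathbf G^a-\partial_{\mathbf y^b}\mathbf G^a\,\partial_{\mathbf y^a}\mathbf G^b\big)$ and group the result into three parts.
\begin{itemize}
\item The pure $M_1$ terms reproduce $\operatorname{Ric}^{M_1}(y_1)$. Together with the cross terms coming from $\frac1f y_1^h f_h\, y_2^\alpha$, they yield $-\frac{n_2}{f}H^f(y_1,y_1)$, where $H^f_{ij}=\partial_i\partial_jf-\Gamma^k_{ij}\partial_kf$.
\item The pure $M_2$ terms give $\operatorname{Ric}^{M_2}(y_2)$.
\item The mixed terms give $F_2^2$ times $f\Delta f-(n_2-1)|\nabla f|^2$, with the paper's sign convention $\Delta f=-\operatorname{tr}H^f$.
\end{itemize}
The term $2\mathbf G^b\partial_{\mathbf y^b}\partial_{\mathbf y^a}\mathbf G^a$ with $b=\gamma$ produces $\frac2f y_1^hf_h\, y_2^\gamma\partial_{y_2^\gamma}\big(\partial_{y_2^\alpha}G_2^\alpha\big)$. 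This is where the S-curvature-type quantity of $F_2$ enters. Since $F_2$ is not assumed Berwald, this term cannot be discarded.

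\textbf{Step 3: Hessian in $\mathbf y$.} Take $\frac12\partial^2/\partial\mathbf y^a\partial\mathbf y^b$ of $\operatorname{Ric}$.
\begin{itemize}
\item For the $(i,j)$ block, only the part quadratic in $y_1$ contributes, which gives \eqref{f2}.
\item For the mixed $(i,\alpha)$ block, the terms linear in $y_1$ have coefficients involving $y_2^\gamma\partial_{y_2^\gamma}\partial_{y_2^\alpha}G_2^\alpha$. By the homogeneity identity $y_2^\gamma\,\partial_{y_2^\gamma}\partial_{y_2^\alpha}G_2^\alpha=\partial_{y_2^\alpha}G_2^\alpha$, their second derivatives in $(y_1^i,y_2^\alpha)$ cancel against the $-\partial\mathbf G\,\partial\mathbf G$ cross terms. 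This gives \eqref{f3}, and we expect to check it term by term.
\item For the $(\alpha,\beta)$ block, differentiating $\frac1f y_1^hf_h\,(\text{degree-1 in }y_2)$ twice in $y_2$ produces $\frac1f y_1^hf_h\,G^\gamma_{\alpha\beta\gamma}$. Here $G^\gamma_{\alpha\beta\gamma}=2E_{\alpha\beta}$, the mean Berwald curvature, possibly up to the normalization used in \cite{wlt}. The $F_2^2$ term gives $g_{\alpha\beta}$ times the bracket, which yields \eqref{f4}.
\end{itemize}

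\textbf{Main obstacle.} We expect the hardest part to be the bookkeeping in Step 2: the mixed $M_1$–$M_2$ terms in the quadratic products $\partial\mathbf G\,\partial\mathbf G$, and the exact coefficient $n_2-1$ together with the sign of $f\Delta f$ under the convention $\Delta=-\operatorname{div}\nabla$. To control this, we would first verify the result in the Riemannian case $F_2$ Riemannian, where it must reduce to O'Neill's formulas. Then we would track only the additional non-quadratic $G_2$ terms, which contribute just the $G^\gamma_{\alpha\beta\gamma}$ correction.
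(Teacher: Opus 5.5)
The paper gives no proof of this proposition; it quotes it from Tokura et al. Your computation therefore has to stand on its own.

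\textbf{What is correct.} Your spray coefficients are right. Your bookkeeping for the $-\frac{n_2}{f}H^f(y_1,y_1)$ term and for the $F_2^2\bigl[f\Delta f-(n_2-1)\|\nabla f\|^2\bigr]$ term is also right.

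\textbf{The gap: the $S_2$ term.} The problem is how you treat $S_2:=\partial_{y_2^\alpha}G_2^\alpha$. Put $\phi=\frac1f y_1^h\partial_hf$.
\begin{itemize}
\item The term $2\mathbf G^\gamma\partial_{y_2^\gamma}\partial_{\mathbf y^a}\mathbf G^a$ contributes $2\phi\,y_2^\gamma\partial_{y_2^\gamma}S_2=2\phi S_2$.
\item Since $\mathbf G^\alpha_\beta=G^\alpha_{2\beta}+\phi\,\delta^\alpha_\beta$, the product $-\mathbf G^\alpha_\beta\mathbf G^\beta_\alpha$ contributes $-G^\alpha_{2\beta}G^\beta_{2\alpha}-2\phi S_2-n_2\phi^2$.
\end{itemize}
So the $S_2$ terms cancel identically in the scalar itself, and
\[
\operatorname{Ric}=\operatorname{Ric}^{M_1}(y_1)-\frac{n_2}{f}H^f(y_1,y_1)+\operatorname{Ric}^{M_2}(y_2)+\bigl[f\Delta f-(n_2-1)\|\nabla f\|^2\bigr]F_2^2(y_2).
\]

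\textbf{Why your Step 3 is inconsistent.} In Step 3 every block $\operatorname{Ric}_{ab}$ is a second $\mathbf y$-derivative of this one function. The $\phi S_2$ term therefore cannot cancel in the $(i,\alpha)$ block while surviving in the $(\alpha,\beta)$ block, as your proposal asserts. If it did survive, its mixed Hessian would give $\operatorname{Ric}_{i\alpha}=\frac{\partial_if}{f}\,G^\gamma_{2\gamma\alpha}$. This is nonzero in general even for Riemannian $F_2$, where it equals $\frac{\partial_if}{f}\Gamma^\gamma_{\gamma\alpha}$. That is a coordinate artifact: $S_2$ is not the $S$-curvature and is not tensorial. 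The Riemannian sanity check you planned would expose exactly this.

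\textbf{What your route actually proves.} Use $\operatorname{Ric}_{ab}=\frac12[\operatorname{Ric}]_{\mathbf y^a\mathbf y^b}$, which is your definition and the one this paper uses for Einstein metrics. Then your route proves $\operatorname{Ric}_{ij}=(\operatorname{Ric}^{M_1})_{ij}-\frac{n_2}{f}H^f_{ij}$ and $\operatorname{Ric}_{i\alpha}=0$. For the fiber block it proves
\[
\operatorname{Ric}_{\alpha\beta}=\operatorname{Ric}^{M_2}_{\alpha\beta}+\bigl[f\Delta f-(n_2-1)\|\nabla f\|^2\bigr]g_{\alpha\beta}.
\]
This is the stated formula \emph{without} the term $\frac1f y_1^h\partial_hf\,G^\gamma_{\alpha\beta\gamma}$.

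The two versions agree exactly when $\phi\,G^\gamma_{\alpha\beta\gamma}=2\phi E_{\alpha\beta}$ vanishes. In particular they agree for weakly Berwald fibers, which is the only case the paper uses later (Corollary \ref{f20} onward).

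The $E$-term cannot be produced by the $\mathbf y$-Hessian approach, however carefully you do the bookkeeping. Either Tokura et al.\ work with a different Ricci tensor, or that term in the quoted formula is spurious. To prove the statement as written for a general fiber, you would have to adopt their definition. Otherwise you should state and prove the $E$-free version above.
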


\bigskip

 Hence, Einstein Finslerian warped products satisfy:

	\begin{corollary}\label{f5} Let $M = M_{1}^{n_1}\times_{f}M_{2}^{n_2}$ be a Finslerian warped product manifold, where $(M_{1}^{n_1},g_1)$ is a Riemannian manifold , then $M$ is Einstein Finslerian warped product with $\operatorname{Ric}_{ab}=\lambda g_{ab}$ if and only if  
		\begin{align}
			(\operatorname{Ric}^{M_1})_{ij}&=\lambda g_1{_{ij}} + \frac{{n_2}}{f}\bigg(\frac{\partial^{2}f}{\partial x_1^i\partial x_2^j}-G_{ij}^k \frac{\partial f}{\partial x_1^k}\bigg),\label{f6}\\
			\operatorname{Ric}_{\alpha\beta}^{M_2}=&[\lambda f^2-f\Delta f + (n_2-1) g_1(\nabla f,\nabla f)]g_{\alpha\beta}\label{f7}\\
			&-\frac{1}{f}y_1^h\frac{\partial f}{\partial x_1^h}G_{\alpha\beta\gamma}^{\gamma},\nonumber
		\end{align}
  where, $\lambda$ and $\mu$ respectively, are Ricci scalar functions on $M$ and fiber of $M$, viz., $(M_2^{n_2}, F_2)$ respectively. 
	\end{corollary}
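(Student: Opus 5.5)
The corollary follows by substituting the Einstein condition into Proposition \ref{f101} block by block. We use the block structure of the fundamental tensor of $F^2=F_1^2+f^2F_2^2$: $\mathbf{g}_{ij}=g_{1\,ij}$, $\mathbf{g}_{i\alpha}=0$, and $\mathbf{g}_{\alpha\beta}=f^2 g_{\alpha\beta}$. Consequently the tensor equation $\operatorname{Ric}_{ab}=\lambda\mathbf{g}_{ab}$ on $M$, which by Definition \ref{Einstein} is equivalent to $\operatorname{Ric}=\lambda F^2$ (differentiate twice in $y$ for one direction; contract with $y^ay^b$ and use $2$-homogeneity of $\operatorname{Ric}$ for the other), splits into three independent families of equations:
\[
\operatorname{Ric}_{ij}=\lambda g_{1\,ij},\qquad \operatorname{Ric}_{i\alpha}=0,\qquad \operatorname{Ric}_{\alpha\beta}=\lambda f^2 g_{\alpha\beta}.
\]

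For the mixed block, \eqref{f3} gives $\operatorname{Ric}_{i\alpha}=0$ identically, so this equation imposes no condition. For the base block, we equate \eqref{f2} with $\lambda g_{1\,ij}$ and move the Hessian-type term to the other side; this gives exactly \eqref{f6}. For the fiber block, we equate \eqref{f4} with $\lambda f^2 g_{\alpha\beta}$ and solve for $\operatorname{Ric}^{M_2}_{\alpha\beta}$:
\[
\operatorname{Ric}^{M_2}_{\alpha\beta}=\lambda f^2 g_{\alpha\beta}+\bigl[-f\Delta f+(n_2-1)g_1(\nabla f,\nabla f)\bigr]g_{\alpha\beta}-\frac1f y_1^h\frac{\partial f}{\partial x_1^h}G^\gamma_{\alpha\beta\gamma}.
\]
Collecting the $g_{\alpha\beta}$ terms gives \eqref{f7}. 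Every step is reversible, so the same computation gives both directions of the ``if and only if''.

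The only delicate point is bookkeeping of conventions. The fiber block of $\mathbf{g}$ carries the factor $f^2$, and this factor is what produces the term $\lambda f^2$ in \eqref{f7}. One must also keep the sign convention $\Delta f=-\operatorname{tr}H^f$ fixed exactly as it appears in Proposition \ref{f101}, so that the signs in \eqref{f7} match. The function $\mu$ mentioned in the statement (the Ricci scalar of the fiber) plays no role in the equivalence itself; it only names the scalar that arises when $M_2$ is itself Einstein.
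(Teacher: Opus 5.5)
Your proposal is correct and follows the paper's route: impose $\operatorname{Ric}_{ab}=\lambda\mathbf{g}_{ab}$ block by block in Proposition~\ref{f101}, then solve \eqref{f2} and \eqref{f4} for the base and fiber Ricci tensors, with the factor $f^2$ in $\mathbf{g}_{\alpha\beta}$ producing the $\lambda f^2$ term. Your handling of the mixed block via \eqref{f3} and your justification of the reverse implication are slightly more complete than the paper's one-line proof, which only states the forward direction.
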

	
	\begin{proof}
		Suppose $M$ is Einstein, then
		\begin{equation}\label{f122}
			\operatorname{Ric}_{ab}=\lambda g_{ab}.   
		\end{equation}
		Now using equations (\ref{f2}) and (\ref{f4}), we obtain  (\ref{f6}) and (\ref{f7}). 
	\end{proof}

\vspace{0.15in}

Consequently, we obtain  a corollary that will significantly aid  our forthcoming calculations.

	\begin{corollary}\label{f20}
		If $M_{1}^{n_1}\times_{f}M_{2}^{n_2}$ is a Finslerian warped product manifold,
  where $(M_{1}^{n_1},g_1)$ is a Riemannian manifold and $(M_2^{n_2},F_2)$ is a weakly Berwald Finsler manifold, then $M$ is Einstein Finsler warped product with $\operatorname{Ric}_{ab}=\lambda g_{ab}$ if and only if

		\begin{align}
		(\operatorname{Ric}^{M_1})_{ij}=\lambda g_1{_{ij}}+\frac{{n_2}}{f}H_{ij}^f,\label{f8}&&\\
			 (M_2,g_{M_2})\,\text{is  Einstein with }\, \operatorname{Ric}_{\alpha\beta}^{M_2}=\mu g_{\alpha\beta},\label{f9}&&\\
		 \mu=\lambda f^2 - f \Delta f + (n_2-1)  g_1(\nabla f,\nabla f).\label{f10}
		\end{align}
  
\end{corollary}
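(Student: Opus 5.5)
The plan is to specialize Corollary \ref{f5} using the weakly Berwald hypothesis on the fiber. Corollary \ref{f5} already says that $M$ is Einstein with $\operatorname{Ric}_{ab}=\lambda g_{ab}$ if and only if (\ref{f6}) and (\ref{f7}) hold. So it is enough to show that, under the extra hypothesis, (\ref{f6})--(\ref{f7}) are equivalent to (\ref{f8})--(\ref{f10}).

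\emph{Base equation.} Since $(M_1,g_1)$ is Riemannian, its spray coefficients are quadratic in $y_1$. Hence $G^k_{ij}$ are the Christoffel symbols of $g_1$. The combination $\frac{\partial^2 f}{\partial x_1^i\partial x_1^j}-G^k_{ij}\frac{\partial f}{\partial x_1^k}$ in (\ref{f6}) is then exactly the Riemannian Hessian $H^f_{ij}$; the second index of the second derivative should read $x_1^j$, as $f$ depends only on $x_1$. Thus (\ref{f6}) is literally (\ref{f8}).

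\emph{Fiber equation.} By the local formula for the $E$-curvature, $G^\gamma_{\alpha\beta\gamma}=B^\gamma_{\alpha\beta\gamma}=2E_{\alpha\beta}$ on $M_2$. The fiber is weakly Berwald, so $E\equiv 0$, and the term $\frac{1}{f}y_1^h\frac{\partial f}{\partial x_1^h}G^\gamma_{\alpha\beta\gamma}$ in (\ref{f7}) vanishes identically. Equation (\ref{f7}) then becomes
\[
\operatorname{Ric}^{M_2}_{\alpha\beta}=\mu\, g_{\alpha\beta},\qquad \mu:=\lambda f^2-f\Delta f+(n_2-1)g_1(\nabla f,\nabla f).
\]
This is (\ref{f9}) together with (\ref{f10}).

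\emph{Main obstacle.} The point needing care is that $\mu$ is a priori a function of $x_1$ (and possibly of the fiber point through $\lambda$), while $\operatorname{Ric}^{M_2}_{\alpha\beta}$ and $g_{\alpha\beta}$ depend only on $(x_2,y_2)$. I would handle this by separation of variables. Fix $(x_2,y_2)$ with $g_{\alpha\beta}$ nondegenerate and take a trace against $g^{\alpha\beta}$. This shows $\mu$ equals $\frac{1}{n_2}g^{\alpha\beta}\operatorname{Ric}^{M_2}_{\alpha\beta}$, which depends only on the fiber variables. Since $\mu$ also depends only on $x_1$ and $x_2$, it is a function of $x_2$ alone; that is, $\mu$ is the Ricci scalar function of $M_2$, as claimed in the statement. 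In particular, $\mu$ is constant whenever $\lambda$ is a function on the base only, which is the situation used later.

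\emph{Converse.} Conversely, assume (\ref{f8})--(\ref{f10}). The same two substitutions, Hessian equals the Christoffel expression and $E=0$ removes the extra term, give back (\ref{f6}) and (\ref{f7}). Since $\operatorname{Ric}_{i\alpha}=0$ by (\ref{f3}), Corollary \ref{f5} then yields $\operatorname{Ric}_{ab}=\lambda g_{ab}$.
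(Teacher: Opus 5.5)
Your proposal is correct and takes the same route as the paper: specialize Corollary~\ref{f5}, identify $G^k_{ij}$ with the Christoffel symbols of $g_1$ so that (\ref{f6}) becomes (\ref{f8}), and use the weakly Berwald condition $G^\gamma_{\alpha\beta\gamma}=2E_{\alpha\beta}=0$ to reduce (\ref{f7}) to (\ref{f9})--(\ref{f10}). The paper leaves implicit two things you supply, namely your separation-of-variables check that $\mu$ depends only on $x_2$ and your explicit converse. Moreover, tracing (\ref{f8}) already shows that $\lambda$ depends on $x_1$ alone, so your check in fact forces $\mu$ to be constant.
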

	
	\begin{proof}
		As $(M_1, g_1)$ is a Riemannian manifold, the spray coefficients
		$G_{ij}^k$ actually reduce to Christoffel symbols  $\Gamma^{k}_{ij}$. And the expression 
		$\bigg(\frac{\partial^{2}f}{\partial x_1^i\partial x_2^j}-G_{ij}^k \frac{\partial f}{\partial x_1^k}\bigg)$ is a coordianate expression for 
		the Hessian of $f$. Hence, we obtain (\ref{f8}). As fiber manifold is weakly Berwald, $G_{\alpha\beta\gamma}^{\gamma}=0$. Thus we get (\ref{f10}).
	\end{proof}
	
	\vspace{0.15in}

 It should be noted that, we recover Ricci curvatures of Riemannian warped   product manifolds, in particular, Corollary 3 \cite{kim}, about Einstein warped manifolds.

\bigskip

Next, we will obtain relations between $\operatorname{Scal}_M$, $\operatorname{Scal}_{M_1}$ and $\operatorname{Scal}_{M_2}$. These relationships will be useful in finding obstructions to 
the existence of Einstein  warped product manifolds.

\vspace{0.15in}

\begin{corollary}
If $M_{1}^{n_1}\times_{f}M_{2}^{n_2}$ is an Einstein Finslerian warped product manifold with 
$\operatorname{Ric}_{ab}=\lambda g_{ab}$, 
where $(M_{1}^{n_1},g_1)$ is a Riemannian manifold and $(M_2^{n_2},F_2)$ is a weakly Berwald Finsler manifold.
Then the scalar curvatures 
 of warped product, base and fiber satisfy:
\begin{equation}\label{1.4}
			\operatorname{Scal}_{M_1}=n_1\lambda- n_2 \frac{\Delta f}{f},
		\end{equation}
 \begin{equation}\label{1.5}
			\operatorname{Scal}_{M_2}=n_2\mu,
		\end{equation}
  \begin{equation}\label{1.6}
\operatorname{Scal}_{M_1}+\dfrac{\operatorname{Scal}_{M_2}}{f^2}=n_2(n_2 -1)\dfrac{\|\nabla f\|^2}{f^2}-2n_2\frac{\Delta f}{f}+\operatorname{Scal}_M .		
	\end{equation}
    
\end{corollary}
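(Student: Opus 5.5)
The plan is to take traces of the three identities in Corollary \ref{f20}, each against the appropriate fundamental tensor, and then combine them. Throughout, a scalar curvature is understood as the trace of the Ricci tensor $\operatorname{Ric}_{ab}$ with respect to the inverse fundamental tensor. So $\operatorname{Scal}_M=\mathbf{g}^{ab}\operatorname{Ric}_{ab}$, $\operatorname{Scal}_{M_1}=g_1^{ij}(\operatorname{Ric}^{M_1})_{ij}$ and $\operatorname{Scal}_{M_2}=g^{\alpha\beta}\operatorname{Ric}^{M_2}_{\alpha\beta}$.

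For \eqref{1.4}, I contract \eqref{f8} with $g_1^{ij}$. Since $g_1^{ij}g_{1\,ij}=n_1$ and $g_1^{ij}H^f_{ij}=\operatorname{tr}_g H^f=-\Delta f$ under the paper's sign convention, this gives $\operatorname{Scal}_{M_1}=n_1\lambda-n_2\Delta f/f$ directly.

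For \eqref{1.5}, I contract \eqref{f9}, namely $\operatorname{Ric}^{M_2}_{\alpha\beta}=\mu g_{\alpha\beta}$, with $g^{\alpha\beta}$. Because $g^{\alpha\beta}g_{\alpha\beta}=n_2$ at every $(x_2,y_2)$, the fact that $g_{\alpha\beta}$ depends on $y_2$ is harmless, and we obtain $\operatorname{Scal}_{M_2}=n_2\mu$.

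For \eqref{1.6}, I first compute $\operatorname{Scal}_M$. The Einstein condition $\operatorname{Ric}_{ab}=\lambda\mathbf{g}_{ab}$ traced with $\mathbf{g}^{ab}$ gives $\operatorname{Scal}_M=(n_1+n_2)\lambda$. Next I substitute \eqref{f10} into \eqref{1.5} and divide by $f^2$:
\[
\frac{\operatorname{Scal}_{M_2}}{f^2}=n_2\lambda-n_2\frac{\Delta f}{f}+n_2(n_2-1)\frac{\|\nabla f\|^2}{f^2}.
\]
Adding this to \eqref{1.4} yields $(n_1+n_2)\lambda-2n_2\Delta f/f+n_2(n_2-1)\|\nabla f\|^2/f^2$, which is \eqref{1.6} once $(n_1+n_2)\lambda$ is replaced by $\operatorname{Scal}_M$.

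The only delicate point is conventional rather than computational. In the Finsler setting one must fix scalar curvature as the trace of the $y$-dependent tensor $\operatorname{Ric}_{ab}$ against $\mathbf{g}^{ab}$. One should also note that, by the block-diagonal form of $\mathbf{g}_{ab}$, the fiber block is $f^2g_{\alpha\beta}$ with inverse $f^{-2}g^{\alpha\beta}$. With these conventions, and with the sign convention $\Delta f=-\operatorname{tr}_gH^f$, every trace reduces to a dimension count and the identities follow.
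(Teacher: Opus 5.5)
Your proposal is correct and follows the same route as the paper: trace \eqref{f8} and \eqref{f9} to get \eqref{1.4} and \eqref{1.5}, then substitute \eqref{f10} into \eqref{1.5}, divide by $f^2$, and add \eqref{1.4} to obtain \eqref{1.6}. Your write-up also makes explicit two things the paper's terse proof leaves implicit, namely the identity $\operatorname{Scal}_M=(n_1+n_2)\lambda$ and the sign convention $\Delta f=-\operatorname{tr}_g H^f$.
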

\begin{proof}
Now contracting the first two equations of the aforementioned corollary, we confirm
		
			$$\operatorname{Scal}_{M_1}=n_1\lambda- n_2 \frac{\Delta f}{f},$$

		and	$$\operatorname{Scal}_{M_2}=n_2\mu.$$
		
	If we use  \eqref{f10},  \eqref{1.4} and \eqref{1.5}, we conclude
	\begin{equation*}
\operatorname{Scal}_{M_1}+\dfrac{\operatorname{Scal}_{M_2}}{f^2}=n_2(n_2 -1)\dfrac{\|\nabla f\|^2}{f^2}-2n_2\frac{\Delta f}{f}+\operatorname{Scal}_M .		
	\end{equation*}
	\end{proof}

We will need the following important observation about Einstein
Finslerian warped product manifolds which tells us that the 
dimension of non-trivial Finslerian manifolds must be at least $4$.

\vspace{0.15in}

 \begin{proposition}\label{prop5.1}
     Let $(M_1^{n_1},g_1)$ be a compact Riemannian manifold,  the base, $f$ as the  warping function, and $(M_2^{n_2},F_2)$ be a weakly Berwald Finsler manifold,  the fiber. Then, for an Einstein warped product manifold 
     $M = M_1\times_f M_2 $ with $n_1=1$ or $n_2=1$  and a non-negative (or non-positive) Ricci scalar $\lambda(x_1,x_2)$, the warped product is simply a Riemannian product of two Finslerian manifolds, and is  in fact, also a weakly Berwald manifold.
 \end{proposition}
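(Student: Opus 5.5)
We prove Proposition \ref{prop5.1} by working entirely with Corollary \ref{f20}, splitting into the two cases $n_1=1$ and $n_2=1$. In each case the aim is to show that $f$ is constant. With the paper's sign convention $\Delta f=-\operatorname{div}(\nabla f)$ we have $\int_{M_1}\Delta f\, dV_{g_1}=0$ on the compact base, and a function with $\Delta f$ of one sign on a compact manifold is constant. So the strategy is to extract a one-signed equation for $\Delta f$, or for the Hessian of $f$, from (\ref{f8})--(\ref{f10}).

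\textbf{Case $n_1=1$.} Here $M_1$ is a compact $1$-manifold, i.e.\ a circle $S^1$ with arclength parameter $t$. Its Ricci tensor vanishes identically, and $H^f_{11}=f''$. Equation (\ref{f8}) then reads $0=\lambda+\frac{n_2}{f}f''$, so $f''=-\frac{\lambda}{n_2}f$. If $\lambda\ge 0$, then $f''\le 0$ on $S^1$ because $f>0$. Integrating over $S^1$ gives $\int f''=0$, hence $f''\equiv 0$, and periodicity then forces $f$ to be constant. If $\lambda\le 0$, then $f''\ge0$ and the same argument applies.

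The one subtlety is that $\lambda$ is a priori a function $\lambda(x_1,x_2)$. Equation (\ref{f8}) shows that it depends only on $x_1$, and we only need its sign, so this causes no difficulty.

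\textbf{Case $n_2=1$.} The fiber is $1$-dimensional, so $\operatorname{Ric}^{M_2}\equiv 0$ and $\mu=0$. Equation (\ref{f10}) becomes $\lambda f^2 - f\Delta f=0$, because the term $(n_2-1)\|\nabla f\|^2$ drops out. Hence $\Delta f=\lambda f$.

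Integrating over the compact $M_1$ gives $\int_{M_1}\lambda f\, dV=0$. Since $f>0$ and $\lambda$ has constant sign, $\lambda f\equiv0$, and therefore $\Delta f\equiv 0$. Then $f$ is harmonic on a compact manifold, so $f$ is constant. This uses only that $\lambda$ has a sign, not that it is constant. It is the step I would double-check, making sure that (\ref{f10}) really holds pointwise on $M_1$ with $\lambda$ viewed as a function there.

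\textbf{Conclusion.} Once $f$ is constant, $M$ is a Riemannian product of $(M_1,g_1)$ and $(M_2,cF_2)$. It remains to show that $M$ is weakly Berwald, and here I would invoke the later Corollary \ref{cor4.2}: a warped product of weakly Berwald manifolds with constant warping function is weakly Berwald. The Riemannian base $M_1$ is trivially weakly Berwald and $M_2$ is weakly Berwald by hypothesis.

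If Corollary \ref{cor4.2} is not available at this point, I would argue directly. For a product metric, the spray splits as $G^i(x_1,y_1)$ and $G^\alpha(x_2,y_2)$. All mixed third $y$-derivatives therefore vanish, so $E_{ab}$ is block diagonal with blocks $E^{M_1}=0$ and $E^{M_2}=0$. The main obstacle overall is making sure the sign argument is not disturbed by the possible non-constancy of $\lambda$; as noted above, in both cases only the sign of $\lambda$ enters.
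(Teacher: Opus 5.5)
Your proof is correct and follows the paper's argument. In both cases the paper likewise reduces \eqref{f8} (for $n_1=1$), or \eqref{f10} with $\mu=0$ (for $n_2=1$), to a one-signed equation for $\Delta f$ ($\Delta f/f=\lambda/n_2$, resp.\ $\Delta f=\lambda f$), concludes from the sign of $\lambda$ and compactness of the base that $f$ is constant, and then invokes Corollary~\ref{weakly} for the weakly Berwald conclusion. Your explicit integration over $M_1$, and your direct block-diagonal check of the $E$-curvature, simply fill in the steps the paper leaves at ``as the base manifold is compact.''
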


 \begin{proof}
     We prove it in two cases:

 {\bf Case 1 : $n_1=1.$}\\

As $n_1=1$ so $(\operatorname{Scal}_{M_1})_{ij}=0.$ If we take trace in \eqref{f8}, then we obtain
$$\frac{\lambda(x_1,x_2)}{n_2}=  \frac{\Delta f(x_1)}{f (x_1)}.$$

Observe that from the last equation, if $\lambda(x_1,x_2)$ is non-negative (or non-positive), then $f(x_1)$ is constant,
as the base manifold is compact.

{\bf Case 2 : $n_2=1.$}\\

As $n_2=1$ so $(\operatorname{Scal}_{M_2})_{ij}=0.$ If we take trace in \eqref{f9} then we obtain,
   $$\mu(x_2)=0.$$

   Thus, by \eqref{f10} we get
   $$\lambda(x_1,x_2) f(x_1) = \Delta f(x_1).$$

Hence, by the similar argument as in case $1$, $f(x_1)$ is constant.\\ 
In both the cases, $M$ is the Riemannian product of two Finsler manifolds and is weakly Berwald by Corollary \ref{weakly}
proved in \S \ref{sec4}
\end{proof}

\vspace{0.15in}

Thus, we conclude that, the dimension of Finslerian warped product can not be $1, 2,$ or $3$.

\vspace{0.15in}

 \begin{corollary}
Let $M = M_1^{n_1}\times_f M_2^{n_2} $ be as in the above proposition.
Then, for  Einstein  Finslerian manifold $M$, the warped product will be nontrivial, if $\operatorname{dim}M \geq 4$ and the Ricci scalar $ \lambda(x_1,x_2)$ is non-negative (or non-positive).

 \end{corollary}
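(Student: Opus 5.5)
The plan is to read the corollary as a necessary condition: if the Einstein Finslerian warped product $M=M_1^{n_1}\times_f M_2^{n_2}$, with compact Riemannian base, weakly Berwald fiber and Ricci scalar $\lambda(x_1,x_2)$ of constant sign, is \emph{nontrivial} (i.e.\ $f$ is nonconstant), then $\dim M\geq 4$. I will obtain this as the contrapositive of Proposition \ref{prop5.1}, so no new curvature computation should be required.

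First, I will assume $f$ is nonconstant and argue by contradiction on the dimensions. Proposition \ref{prop5.1} says that under the standing hypotheses ($M_1$ compact, $M_2$ weakly Berwald, $\lambda\geq 0$ or $\lambda\leq 0$), either $n_1=1$ or $n_2=1$ forces $f$ to be constant. Hence a nontrivial warped product must have $n_1\geq 2$ and $n_2\geq 2$. Since $\dim M=n_1+n_2$, it follows that $\dim M\geq 4$. The same argument also excludes $\dim M\in\{2,3\}$, because any splitting of $2$ or $3$ into positive integers $n_1+n_2$ has $n_1=1$ or $n_2=1$.

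For completeness I would recall why the two cases in Proposition \ref{prop5.1} force constancy. In each case, tracing \eqref{f8} (when $n_1=1$), or using \eqref{f9}--\eqref{f10} (when $n_2=1$, where $\mu=0$), gives $\Delta f = c\,\lambda f$ for some constant $c>0$. With $f>0$ and $\lambda$ of fixed sign, $\Delta f$ has a fixed sign on the compact manifold $M_1$. Integrating over $M_1$ and using $\int_{M_1}\Delta f\,dV_{g_1}=0$ shows $\Delta f\equiv 0$, so $f$ is harmonic and therefore constant on a compact manifold. One point deserves care: $\lambda$ depends a priori on $(x_1,x_2)$, while $\Delta f/f$ depends only on $x_1$. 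Because the equation is an identity in $(x_1,x_2)$, one can fix any $x_2$ and run the argument on $M_1$ alone.

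The main obstacle is not technical but interpretive. The statement, read literally (``nontrivial if $\dim M\geq 4$''), would be an existence claim, and the preceding proposition does not support that. I will therefore state explicitly that the corollary is the contrapositive formulation: nontriviality requires $\dim M\geq 4$. Once this is fixed, the proof reduces entirely to Proposition \ref{prop5.1}.
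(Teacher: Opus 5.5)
This is correct and is essentially the paper's own (implicit) argument: the corollary is the contrapositive of Proposition \ref{prop5.1}, so a nontrivial Einstein warped product with $\lambda$ of fixed sign forces $n_1,n_2\geq 2$, hence $\dim M\geq 4$. Your necessary-condition reading is the intended one (cf.\ the sentence preceding the corollary and Remark \ref{dim}; the literal sufficiency reading would even conflict with Theorem \ref{mainf}), and your integration argument via $\int_{M_1}\Delta f\,dV_{g_1}=0$ justifies the constancy step more cleanly than the paper's brief appeal to compactness.
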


\vspace{0.15in}

 \begin{remark}\label{dim}
     In view of the above corollary, 
in the sequel we will work with $n_1, n_2 \geq  2$ 
and $n \geq 4$.
 \end{remark}

\subsection{Finsler warped product manifolds of finite volume}\label{sec3.2}

\noindent

{\bf Warped product of Finsler manifolds of finite Holmes-Thompson, maximum and minimum Volume: }

 \vspace{0.15in}
 
	\begin{theorem}\label{ht}
		 Let $(M_1^{n_1},F_1)$  and  $(M_2^{n_2},F_2)$ be base and fiber  Finsler manifolds respectively, of the warped product $M=M_1\times_f M_2$. Suppose that $M_1, M_2$ have  
finite Holmes-Thompson volume, then the warped product $M=M_1\times_f M_2$ also has finite Holmes-Thompson volume provided that the warping function $f$ 
  is bounded, with $a \leq f \leq b$ for some $a, b > 0$
  and $b < \infty$.
  \end{theorem}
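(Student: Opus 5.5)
We compute the Holmes–Thompson density $\sigma_{HT}$ of the warped product $F^2=F_1^2+f^2F_2^2$ pointwise in terms of those of $F_1$ and $F_2$, then integrate using Fubini. The working formula, equivalent to \eqref{2.02} by homogeneity, is
\begin{equation*}
\sigma_{HT}(x)=\frac{1}{\operatorname{Vol}(\mathbb{B}^n(1))}\int_{F(x,y)<1}\det(g_{ab}(x,y))\,\operatorname{dy}.
\end{equation*}
The block structure of $\mathbf{g}_{ab}$ in the definition of warped product gives
\begin{equation*}
\det \mathbf{g}_{ab}(x_1,y_1,x_2,y_2)=\det g_{ij}(x_1,y_1)\, f(x_1)^{2n_2}\det g_{\alpha\beta}(x_2,y_2).
\end{equation*}
This identity holds on $TM_1^0\times TM_2^0$. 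The complement, where $y_1=0$ or $y_2=0$, has measure zero in each $T_xM$, so it does not affect the integral.

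\textbf{Step 1: reduce to a product of fiber integrals.} At $x=(x_1,x_2)$ the region of integration is $\{F_1(x_1,y_1)^2+f^2F_2(x_2,y_2)^2<1\}\subset T_{x_1}M_1\times T_{x_2}M_2$. I will avoid computing this non-product region exactly and instead squeeze it between product sets. It contains $\{F_1<\tfrac{1}{\sqrt2}\}\times\{fF_2<\tfrac1{\sqrt2}\}$ and is contained in $\{F_1<1\}\times\{fF_2<1\}$. Since $\det g_{ij}(x_1,\cdot)$ and $\det g_{\alpha\beta}(x_2,\cdot)$ are $0$-homogeneous in $y$, the change of variables $y_2\mapsto y_2/f$ (Jacobian $f^{-n_2}$) and scaling of the balls give
\begin{equation*}
c\, f(x_1)^{2n_2-n_2}\,\sigma^{(1)}_{HT}(x_1)\,\sigma^{(2)}_{HT}(x_2)\le \sigma_{HT}(x)\le C\, f(x_1)^{n_2}\,\sigma^{(1)}_{HT}(x_1)\,\sigma^{(2)}_{HT}(x_2),
\end{equation*}
where $c,C$ depend only on $n_1,n_2$ through ratios of Euclidean ball volumes and powers of $\sqrt2$. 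The upper bound is the one that matters. Alternatively, a polar-coordinate computation in the $(F_1,fF_2)$ "radii" gives the exact constant as a Beta integral. This is the main technical point, but only the inequality is needed.

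\textbf{Step 2: integrate.} By Fubini over $M=M_1\times M_2$ (coordinates $\operatorname{dx}=\operatorname{dx_1}\operatorname{dx_2}$), together with $f\le b$,
\begin{equation*}
\operatorname{Vol}_{HT}(M)\le C\,b^{n_2}\operatorname{Vol}_{HT}(M_1)\operatorname{Vol}_{HT}(M_2)<\infty .
\end{equation*}
The lower bound $a>0$ is used only to ensure that $F$ is a genuine Finsler metric and that the companion lower estimate $\operatorname{Vol}_{HT}(M)\ge c\,a^{n_2}\operatorname{Vol}_{HT}(M_1)\operatorname{Vol}_{HT}(M_2)$ holds.

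The main obstacle is justifying the region comparison and the Jacobian bookkeeping carefully. Beyond that, the only potential gap is the measure-zero set $\{y_1=0\}\cup\{y_2=0\}$, where $F^2$ is not smooth. There I will note that the integrand stays bounded near it, because $\det g$ is $0$-homogeneous and continuous on each slit factor. Thus the set contributes nothing to the integral.
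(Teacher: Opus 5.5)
Your argument is correct and is essentially the paper's proof: both contain the $F$-unit ball in a product of fiber balls, use the block-diagonal determinant $\det g_{ij}\, f^{2n_2}\det g_{\alpha\beta}$, bound $f\le b$, and conclude with Fubini--Tonelli. The only difference is that you rescale the fiber variable $y_2$ by $f(x_1)$ pointwise instead of using the paper's cruder containment $\{F_2\le 1/a\}$. This gives the sharper estimate $\operatorname{Vol}_{HT}(M)\le C\,b^{n_2}\operatorname{Vol}_{HT}(M_1)\operatorname{Vol}_{HT}(M_2)$, whereas the paper's argument effectively carries a factor $b^{2n_2}a^{-n_2}$, and it shows that the lower bound $a$ is not needed for finiteness.
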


	\begin{proof}
	From Lemma $3.1$ of \cite{sc} it follows that,
		\begin{equation}\label{ss2}
			\operatorname{Vol}_{\operatorname{\operatorname{HT}}}(M^n,F)= \frac{1}{\operatorname{Vol}(\mathbb{B}^n(1))}\int_{BM^1}\det g_{ab}\;\operatorname{dx}\;\operatorname{dy},
		\end{equation}
            where $BM^1=\{(x,y)|F(x,y)\leq 1\}$. Then
            clearly,
            \begin{equation}\label{BHMAX}
                F_1^2(x_1, y_1) +f^2(x_1) F_2^2(x_2, y_2)\leq 1,   
                \end{equation}
		implies that 
        \begin{equation}\label{BHMAX1}
            BM^1\subseteq BM_1^1 \times BM_2^{1/a}.       
        \end{equation}
		
		Therefore, 
		\begin{eqnarray}
			\int_{BM^1}\det g_{ab} \; \operatorname{dx} \;\operatorname{dy} & \leq &b^{2n_2}\int_{BM_1^1 \times BM_2^{1/a}} \det (g_{ij})\det (g_{\alpha\beta})\; \operatorname{dx} \; \operatorname{dy}. \\ \nonumber
		\end{eqnarray}
Equivalently, by using Fubini-Tonelli Theorem,
\begin{align}
      \frac{1}{\operatorname{Vol}(\mathbb{B}^n(1))\operatorname{Vol}(\mathbb{B}^{n_1}(1))\operatorname{Vol}(\mathbb{B}^{n_2}(1/a))}&\int_{BM^1}\det g_{ab} \; \operatorname{dx} \;\operatorname{dy}\leq\\ \nonumber
      &\frac{b^{2n_2}}{\operatorname{Vol}(\mathbb{B}^n(1))}\operatorname{Vol}_{\operatorname{HT}}(M_1^{n_1},F_1)\operatorname{Vol}_{\operatorname{HT}}(M_2^{n_2},F_2).  
      \end{align}
         The last step follows as
         $$\operatorname{Vol}_{\operatorname{HT}}(M^{n_2},F_2)=\frac{1}{\operatorname{Vol}(\mathbb{B}^{n_2}(1/a))}\int_{BM_2^{1/a}} \det g_{ab} \; \operatorname{dx} \; \operatorname{dy}.$$
         Consequently,
         $$\operatorname{Vol}_{\operatorname{HT}}(M^n,F)\leq\frac{b^{2n_2}\operatorname{Vol}(\mathbb{B}^{n_1}(1))\operatorname{Vol}(\mathbb{B}^{n_2}(1/a))}{(\operatorname{Vol}_{\operatorname{HT}}(M_1^{n_1},F_1))(\operatorname{Vol}_{\operatorname{HT}}(M_2^{n_2},F_2))}.$$
         Thus, the assertion follows.
	\end{proof}

	

 \vspace{0.15in}
 
	\begin{theorem}\label{max}
		If $(M_1^{n_1},F_1)$  and $(M_2^{n_2},F_2)$ be base and fiber Finsler manifolds respectively, with finite maximum (minimum)  volume, then the warped product $M=M_1\times_f M_2$ has also  finite maximum (minimum) volume, provided that warping function $f$ is bounded as $a \leq f \leq b$ for some $a, b > 0$.	  
	\end{theorem}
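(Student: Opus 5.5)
The plan is to compute the maximum (minimum) volume density of $F$ pointwise and then integrate over $M_1\times M_2$ with Fubini--Tonelli, following the pattern of Theorem \ref{ht}. The main input is the block-diagonal form of the fundamental tensor of the warped product:
\begin{equation*}
\det(\mathbf{g}_{ab})(x_1,y_1,x_2,y_2)=f^{2n_2}(x_1)\,\det(g_{ij}(x_1,y_1))\,\det(g_{\alpha\beta}(x_2,y_2)).
\end{equation*}
Since $\mathbf{g}_{ab}$ is defined only on $TM_1^0\times TM_2^0$, I will take the maximum (minimum) over the part of the indicatrix $I_{(x_1,x_2)}$ where $y_1\neq 0$ and $y_2\neq 0$. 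Its closure contributes no new values, because each determinant is $0$-homogeneous in its own fibre variable. Under this convention the determinant of $g_{ij}$ depends only on $y_1/F_1(y_1)$, and that of $g_{\alpha\beta}$ only on $y_2/F_2(y_2)$.

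Next I parametrize the indicatrix. Write $y_1=s\,u_1$ and $y_2=t\,u_2$, where $F_1(x_1,u_1)=1$, $F_2(x_2,u_2)=1$, $s,t>0$, and $s^2+f^2t^2=1$. As $(u_1,u_2)$ ranges over $I^1_{x_1}\times I^2_{x_2}$, the product of the two determinants ranges over exactly the products $\det g_{ij}(x_1,u_1)\det g_{\alpha\beta}(x_2,u_2)$, and the choice of $(s,t)$ does not matter. Because both factors are positive and the two variables are independent, the maximum of the product is the product of the maxima, and likewise for minima. Hence
\begin{equation*}
\sigma_{\max}^{M}(x_1,x_2)=f^{n_2}(x_1)\,\sigma^{M_1}_{\max}(x_1)\,\sigma^{M_2}_{\max}(x_2),
\end{equation*}
and similarly for $\sigma_{\min}$. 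This separation step is the one point needing care. The fact that makes it work is that the warping factor $f^{2n_2}$ does not depend on $y$, so the constraint linking $s$ and $t$ never enters the optimization.

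Finally, the volume form splits as $dV^M_{\max}=f^{n_2}\,dV^{M_1}_{\max}\wedge dV^{M_2}_{\max}$. Fubini--Tonelli, with $0<f\le b$, gives
\begin{equation*}
\operatorname{Vol}_{\max}(M)=\int_{M_1}f^{n_2}\,dV^{M_1}_{\max}\cdot\operatorname{Vol}_{\max}(M_2)\le b^{n_2}\operatorname{Vol}_{\max}(M_1)\operatorname{Vol}_{\max}(M_2)<\infty,
\end{equation*}
and the same argument applies to the minimum volume. The lower bound $a$ is not needed for finiteness; it only provides the matching lower estimate $a^{n_2}\operatorname{Vol}(M_1)\operatorname{Vol}(M_2)\le\operatorname{Vol}(M)$. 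Given the explicit formula for $\sigma^M$, I expect this last step to be routine.
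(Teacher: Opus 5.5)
Your proof is correct, and it is sharper than the paper's. The paper never computes $\sigma^M_{\max}$. It writes $\operatorname{Vol}_{\max}(M,F)=\int_M\max_{BM^1}\sqrt{\det\mathbf{g}_{ab}}\,dx$ and reuses the inclusion $BM^1\subseteq BM_1^1\times BM_2^{1/a}$ from the Holmes--Thompson proof to enlarge the set over which the maximum is taken. It then bounds the warping factor by a power of $b$, splits the integral by Fubini, and declares the minimum case ``similar''. You instead parametrize the nondegenerate part of the indicatrix and use the $0$-homogeneity of $g_{ij}$ and $g_{\alpha\beta}$ to get the exact identity $\sigma^M_{\max}=f^{n_2}\sigma^{M_1}_{\max}\sigma^{M_2}_{\max}$, and likewise for $\sigma_{\min}$.

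This buys you several things.
\begin{itemize}
\item Enlarging the domain can only increase a maximum, so the paper's inclusion argument does work for $\operatorname{Vol}_{\max}$. For a minimum it yields $\min_{BM^1}\ge\min_{BM_1^1\times BM_2^{1/a}}$, which is the wrong direction for an upper bound, so the paper's ``similarly'' does not literally go through. Your factorization treats both cases at once.
\item You get the right power $b^{n_2}$: the square root turns the factor $f^{2n_2}$ in the determinant into $f^{n_2}$, whereas the paper writes $b^{2n_2}$.
\item You show that the lower bound $a$ is irrelevant beyond $f>0$. In the paper it enters only as the fibre radius $1/a$, which is immaterial by homogeneity.
\item You never evaluate $\mathbf{g}_{ab}$ at points of $BM^1$ with $y_1=0$ or $y_2=0$, where it is undefined.
\end{itemize}

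The paper's route, for its part, needs only an inequality and recycles a set inclusion it has already established, so it is shorter for the maximum-volume case.
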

	\begin{proof}
		Clearly,
		\begin{equation}
			\sigma_{\max}(x)= \max\limits_{y \in I_{x} M}\sqrt{\det(g_{ab}(x ,y))}
   = \max\limits_{y \in{BM^1}}\sqrt{\det(g_{ab}(x ,y))}.
		\end{equation}
	Thus we  confirm that,	
		\begin{dmath}
			\operatorname{Vol}_{\max}(M,F)= \int_{M} \max\limits_{(y_1,y_2) \in{BM^1}}\sqrt{\det(g_{ab}(x_1,x_2,y_1,y_2))}\; \operatorname{dx}.
        \end{dmath} 
            Then by \eqref{BHMAX} and \eqref{BHMAX1} we have,    
         
       \begin{dmath}  
\operatorname{Vol}_{\max}(M,F)
             \leq   b^{2n_2} \left( \int_{M_1} \max\limits_{y_1 \in BM_1^1}\sqrt{\det(g_{ij}(x_1,y_1))}\operatorname{dx}_1\right)\\
            \left ( \int_{M_2}\max\limits_{y_{2} \in BM_2^{1/a}}\sqrt{\det(g_{\alpha\beta}(x_2,y_2))} \operatorname{dx}_2 \right).
        \end{dmath}    
         


		For minimum volume form the result can be proved similarly.
	\end{proof}
	
	\vspace{0.2in}

\begin{corollary}If $(M_i^{n_i}, F_i),$  for $i = 1,2$ be base and fiber Finsler manifolds respectively, of finite Holmes-Thompson volumes, and if the warped product $(M,F)$ is a Randers manifold,  then  $M$ is also of finite  Busemann Haudorff volume.
  \end{corollary}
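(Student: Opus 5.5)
The corollary compares two volume forms for the same Randers metric $F$ on $M$. We do not compute the Busemann--Hausdorff volume directly. Instead we combine Theorem \ref{ht} with a known pointwise inequality between the Holmes--Thompson and Busemann--Hausdorff densities of Randers metrics.

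The first step is to secure finiteness of $\operatorname{Vol}_{HT}(M,F)$. Since $(M_1,F_1)$ and $(M_2,F_2)$ have finite Holmes--Thompson volume, Theorem \ref{ht} gives that $M=M_1\times_f M_2$ has finite Holmes--Thompson volume. This requires the standing hypothesis of that theorem, namely that the warping function is bounded, $0<a\le f\le b<\infty$, and we carry that hypothesis over into the corollary.

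The second step is the pointwise comparison. Write $F=\alpha+\beta$ with $\alpha=\sqrt{a_{ab}(x)y^ay^b}$ Riemannian and $b(x):=\|\beta\|_\alpha<1$. The standard formulas are
\[
\sigma_{BH}(x)=\bigl(1-b(x)^2\bigr)^{\frac{n+1}{2}}\sqrt{\det(a_{ab}(x))}
\]
and
\[
\sigma_{HT}(x)=\sqrt{\det(a_{ab}(x))}.
\]
The Holmes--Thompson formula follows from \eqref{2.02} using $\det g_{ab}=(F/\alpha)^{n+1}\det a_{ab}$, together with the fact that the unit ball of a Randers norm is a translated $\alpha$-ellipsoid whose polar body has the same $\alpha^*$-volume. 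These formulas give $\sigma_{BH}\le\sigma_{HT}$ pointwise. More generally, for any Finsler metric, the Blaschke--Santal\'o inequality applied to a centrally non-symmetric body about its Santal\'o point already yields $\sigma_{BH}\le\sigma_{HT}$. For the Randers case, however, the explicit formula is cleaner, and we will cite it, for example from \cite{SSZ}.

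The third step is integration. Integrating over $M$ gives
\[
\operatorname{Vol}_{BH}(M,F)\le\operatorname{Vol}_{HT}(M,F)<\infty,
\]
which is the claim.

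The main obstacle is the second step. The Holmes--Thompson density of a Randers metric equals $\sqrt{\det a}$ exactly, but this is a nontrivial computation: one must evaluate $\int_{F<1}\det g\,dy$ for a non-centered ellipsoid. We plan to cite it rather than rederive it.
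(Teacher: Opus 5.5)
Your proof is correct and follows the paper, which simply quotes $\operatorname{Vol}_{BH}(M,F)\le\operatorname{Vol}_{HT}(M,F)$ for Randers metrics from Remark 3.2(ii) of \cite{sc}. The paper leaves implicit the use of Theorem \ref{ht} and its bounded-$f$ hypothesis, which you rightly make explicit, and you justify the inequality pointwise via $\sigma_{BH}=(1-b^2)^{\frac{n+1}{2}}\sqrt{\det(a_{ab})}\le\sqrt{\det(a_{ab})}=\sigma_{HT}$.

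Do delete the ``more generally'' aside, which is false. Write $\sigma_{BH}/\sigma_{HT}=\operatorname{Vol}(\mathbb{B}^n(1))^2/\bigl(\operatorname{vol}(B_x)\operatorname{vol}(B_x^{*})\bigr)$, with $B_x^{*}$ the polar of the unit ball $B_x$ about the origin. Blaschke--Santal\'o bounds this volume product from \emph{above} when the origin is the Santal\'o point (e.g.\ for reversible metrics), so it gives $\sigma_{BH}\ge\sigma_{HT}$, strictly for reversible non-Riemannian metrics. The Randers inequality holds only because the origin is not the center of the ellipsoidal Randers unit ball.
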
 
  
\begin{proof}
 This follows as by  Remark $3.2$ (ii)  p.$5$
\cite{sc},
it follows that $\operatorname{Vol}_{BH}(M,F)\leq \operatorname{Vol}_{HT}(M,F)$.
\end{proof}

\subsection{Existence  of Einstein Finsler warped product of finite volume}\label{sec3.3}
  Inspired by the  proof of Proposition $5$ of \cite{kim},
  we affirm the following result which  establishes 
  the admissible class of Einstein Finsler manifolds.\\
  
 We recall the following identity proved in Lemma
$4$ \textbf{\cite[\boldmath p. $2574$]{kim}}]:

\begin{align} 
			{\operatorname{div}(H^f)}(X)=\operatorname{Ric}(\nabla f,X)-\Delta (df)(X),\label{f14}
		\end{align}
  where $\Delta=d\delta+\delta d$ denotes the Laplacian on base manifold acting on differential forms.

  \vspace{0.15in}
	
	\begin{theorem}\label{exist}
		Let $(M_{1}^{n_1}, g_1)$ be a Riemannian manifold of finite volume and  $f$ be a non-constant, positive  smooth  bounded function on $M_1$, that is $a \leq f \leq b$. If  $f$ satisfies (\ref{f8}) for a scalar function $\lambda$ and for natural numbers $n_2\in\mathbb{N}$, then $f$ satisfies (\ref{f10}) for a Ricci scalar function $\mu$. Hence, for a finite volume Holmes-Thompson, maximum or minimum volume, weakly Berwald Einstein Finsler manifold  $({M_2}^{n_2}, F_2)$ with $\operatorname{Ric}_{\alpha\beta}^{M_2}=\mu g_{\alpha\beta}$, for  scalar function $\mu$, we can construct a finite volume Finsler warped product manifold $M=M_{1}^{n_1}\times_{f}M_{2}^{n_2}$ with $\operatorname{Ric}_{ab}=\lambda g_{ab}$.   
	\end{theorem}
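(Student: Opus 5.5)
The plan follows the proof of Proposition 5 in \cite{kim}. Starting from (\ref{f8}), the aim is to show that the quantity
$$\mu := \lambda f^2 - f\Delta f + (n_2-1)\|\nabla f\|^2$$
is constant on $M_1$. Then (\ref{f10}) holds with this constant, and the construction reduces to gluing on a suitable fiber and citing Corollary \ref{f20} and Theorems \ref{ht}, \ref{max}. The key step is to take the divergence of (\ref{f8}) and combine it with the identity (\ref{f14}).

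\textbf{Step 1 (divergence of (\ref{f8})).} Write (\ref{f8}) as $f\,\operatorname{Ric}^{M_1} = \lambda f g_1 + n_2 H^f$ and take the divergence on $M_1$, applying it to a vector $X$. By the contracted Bianchi identity, $\operatorname{div}\operatorname{Ric}^{M_1} = \frac12 d\operatorname{Scal}_{M_1}$. By (\ref{1.4}), $\operatorname{Scal}_{M_1} = n_1\lambda - n_2\Delta f/f$. Since $\lambda$ enters (\ref{f8}) as a function on $M_1$, here I treat $\lambda$ as a function of $x_1$ alone; this is forced by the tensorial equation. For the Hessian term, use (\ref{f14}):
$$\operatorname{div}H^f(X) = \operatorname{Ric}(\nabla f, X) - \Delta(df)(X),$$
together with $\Delta(df) = d\Delta f$, valid because $d$ commutes with the Hodge Laplacian. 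The remaining occurrences of $\operatorname{Ric}^{M_1}(\nabla f,\cdot)$ are eliminated using (\ref{f8}) again.

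\textbf{Step 2 (the exact identity).} Collecting terms, I expect an identity of the form
$$d\big(\lambda f^2 - f\Delta f + (n_2-1)\|\nabla f\|^2\big) = c\, f^2\, d\lambda$$
for some constant $c$ depending on $n_1, n_2$. For constant $\lambda$ this is the Kim--Kim identity, and it says $d\mu = 0$. The main obstacle is exactly this bookkeeping when $\lambda$ is a general scalar function, since the leftover $d\lambda$ terms must be controlled. My first attempt will be to note that for $n_1 + n_2 \ge 3$ the Einstein condition (\ref{f8}), traced and differentiated, forces $d\lambda = 0$; this is the Schur-type argument applied on the Riemannian base. Failing that, I would restrict to constant $\lambda$, which is the case the paper uses later. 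Either way the outcome is $d\mu = 0$, so $\mu$ is constant. Connectedness of $M_1$ is assumed.

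\textbf{Step 3 (construction).} Choose a weakly Berwald Einstein Finsler manifold $(M_2^{n_2},F_2)$ with $\operatorname{Ric}^{M_2}_{\alpha\beta} = \mu g_{\alpha\beta}$ and finite Holmes--Thompson, maximum or minimum volume. A compact Riemannian Einstein manifold of the appropriate sign of $\mu$ suffices, since Riemannian metrics are Berwald and hence weakly Berwald. Then (\ref{f8}), (\ref{f9}) and (\ref{f10}) all hold, so Corollary \ref{f20} gives $\operatorname{Ric}_{ab} = \lambda g_{ab}$ on $M = M_1\times_f M_2$.

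\textbf{Step 4 (finite volume).} $M_1$ has finite volume, and its Riemannian volume agrees with all of these Finsler volumes. Moreover $a \le f \le b$. Hence Theorem \ref{ht} in the Holmes--Thompson case, and Theorem \ref{max} in the maximum/minimum case, give that $M$ has finite volume in the same category.
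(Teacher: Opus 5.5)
Your route is the paper's own. It is the Kim--Kim divergence computation (contracted Bianchi identity, \eqref{f14} with $\Delta(df)=d\Delta f$, and \eqref{f8} again to eliminate $\operatorname{Ric}^{M_1}(\nabla f,\cdot)$), followed by Corollary~\ref{f20} and Theorems~\ref{ht}, \ref{max}. Carried out, Step~2 gives exactly
\[
n_2\,d\mu=(n_1+n_2-2)\,f^2\,d\lambda,\qquad \mu:=\lambda f^2-f\Delta f+(n_2-1)\|\nabla f\|^2 .
\]
(The coefficient $2n_1+n_2$ in the paper's final display should be $n_1+n_2$.)

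The argument breaks at your first attempt to get rid of $d\lambda$. Equation \eqref{f8} is not an Einstein equation on $M_1$ but a generalized quasi-Einstein one. Tracing and differentiating it yields only the single relation above between $d\mu$ and $d\lambda$. A Schur-type conclusion would need $d\mu=0$, which is the very thing you want to prove. Concretely, take $M_1=S^1$ with $g_1=dt^2$, $f=2+\sin t$, and any $n_2\ge 2$. Since $\operatorname{Ric}^{M_1}=0$, \eqref{f8} holds with $\lambda=-n_2f''/f=n_2\sin t/(2+\sin t)$. This $\lambda$ is non-constant although $n_1+n_2\ge 3$. Then $\mu=(n_2-1)(1+2\sin t)$ is non-constant too, so no Ricci scalar function of a fiber can satisfy \eqref{f10}.

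Your fallback is therefore the right scope, and with constant $\lambda$ your argument is complete. The identity gives $d\mu=0$, so $\mu$ is constant on connected $M_1$, and Steps~3--4 go through. The fiber is supplied by the statement, so you need not produce one; note also that a $1$-dimensional fiber forces $\mu=0$, so your compact Einstein fiber does not exist when $n_2=1$ and $\mu\neq 0$.

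Nothing is lost for the construction part by restricting to constant $\lambda$. If $M_1\times_f M_2$ is actually Einstein, then \eqref{f10} equates a function of $x_2$ with a function of $x_1$, so $\mu$ is constant. The identity then forces $d\lambda=0$ whenever $n_1+n_2\ge 3$; that is where a Schur-type statement genuinely holds.

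For non-constant $\lambda$ the paper's proof does no better than yours. Its step of \emph{integrating} the identity merely names the right-hand side $\mu$, which is a function on $M_1$ and cannot be the Ricci scalar function of the fiber.
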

\begin{proof}
		 Contracting both sides of (\ref{f8}), we have
		\begin{equation*}
			\operatorname{Scal}_{M_1}=n_1\lambda -\frac{n_2}{f}\Delta f.
		\end{equation*}
		
		The above equation imply that
		\begin{eqnarray}
			d\operatorname{Scal}_{M_1}= n_{1} d\lambda + \frac{n_2}{f^2}\{\Delta f df-fd(\Delta f) \}.\label{f11}
		\end{eqnarray}
		From the second Bianchi's identity, we conclude that
		\begin{eqnarray}
			d\operatorname{Scal}_{M_1}=2\mbox{div}(\operatorname{Ric}^{M_1}).\label{f12}
		\end{eqnarray}
		Now, equations (\ref{f11}) and (\ref{f12}) provide that
		\begin{eqnarray}
			\mbox{div}(\operatorname{Ric}^{M_1})=  \frac{n_1}{2} d\lambda + \frac{n_2}{2f^2}\{\Delta f df-fd(\Delta f) \}.\label{f13}
		\end{eqnarray}
		From proof of Proposition 5 of  \cite{kim} we  determine,
		\begin{eqnarray}
			\mbox{div}\left(\frac{1}{f}H^f\right)=-\frac{1}{2f^2}d(g_1(\nabla f,\nabla f))+\frac{1}{f}\mbox{div}(H^f).\label{f15}
		\end{eqnarray}
		From equations (\ref{f14}) and (\ref{f8}), we calculate
		\begin{equation}
			\mbox{div}(H^f)=\lambda df+\frac{n_2}{f} H^f - \Delta df .\label{f16}
		\end{equation}
		After using (\ref{f16}) in (\ref{f15}),  we derive	
		\begin{align}
			\mbox{div}\left(\frac{1}{f}H^f\right)&=\frac{1}{2f^2}\big[(n_2-1)d(g_1(\nabla f,\nabla f))+2\lambda f df\label{f18}\\
			&-2fd(\Delta f)\big].\nonumber
		\end{align}	
		Now taking divergent of both side of (\ref{f8}), we have
		\begin{eqnarray}
			\mbox{div}(\operatorname{Ric}^{M_1})=\mbox{div}\left(\frac{n_2}{f}H^f\right) + \mbox{div}(\lambda g_1)  .\label{f19}
		\end{eqnarray}
		Thus from equations (\ref{f13}), (\ref{f18}) and (\ref{f19}), we infer that

\begin{equation*}
		n_2	d[(n_2-1)(g_1(\nabla f,\nabla f))-(f \Delta f)+\lambda f^2]+  2f^2\operatorname{div}(\lambda g_1)- ( 2 n_1 + {n_2}) (d\lambda) f^2 ) =0.
		\end{equation*}
 Integrating the above equation yields that,
		\begin{equation}\label{valueofmu}
			\mu=(n_2-1)(g_1(\nabla f,\nabla f))-(f\Delta f)+\lambda f^2,
		\end{equation}
		for some scalar function $\mu$. Thus, we proved the first part of this theorem. Next, for given a finite volume Holmes-Thompson, maximum or minimum volume Finslerian manifold $(M_2^{n_2}, F_2)$ with $\operatorname{Ric}_{\alpha\beta}^{M_2}=\mu g_{\alpha\beta}$,  we can construct a finite volume Einstein Finslerian warped product manifold $M=M_{1}^{n_1}\times_{f}M_{2}^{n_2}$ with $\operatorname{Ric}_{ab}=\lambda g_{ab}$, by sufficient part of Corollary \ref{f20}, Theorem \ref{ht} and Theorem \ref{max}, provided  that warping function $f$ is bounded.
\end{proof}

\vspace{0.15in}

\begin{remark}
    \emph{Let} $\mathcal{M}=\{M=M_1^{n_1}\times_f M_2^{n_2}|\;(M_i^{n_i},F_i),i=1,2,\emph{of finite volume}\\ \emph{ (with aforementioned measures), } M\emph{ is Einstein and\;} a\leq f\leq b \}.$\\
    
    \emph{In summary, we have shown that the collection 
$\mathcal{M}$ constitutes the admissible class of all Finsler manifolds, and henceforth, we will work with them.}
\end{remark}

\section{Warped product of weakly Berwald Finsler manifolds}\label{sec4}

It is interesting to know whether the warped product of two 
weakly Berwald or Berwald Finslerian manifold is again
weakly Berwald or Berwald, respectively. We explore this in this section. 

 \vspace{0.15in}
 
	\begin{theorem}\label{Berwald product}
		If the base $(M_1^{n_1},F_1)$ and fiber $(M_2^{n_2},F_2)$
  respectively, of the warped product are Berwald Finsler manifolds, then the warped product $(M,F)$ is a Berwald Finsler manifold if and only if the warping function $f$ is constant.
	\end{theorem}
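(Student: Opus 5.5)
The plan is to compute the spray coefficients of the warped metric $F^2=F_1^2+f^2(x_1)F_2^2$ explicitly, differentiate them three times in the fiber variables, and read off which terms survive. Write $\mathbf{y}=(y_1,y_2)$ and use the block-diagonal form of $\mathbf{g}_{ab}$ recorded in the definition of the warped product. Then formula \eqref{eq1.2} splits into a base component and a fiber component. For the base component $G^i$, the $y_1$-block of $\mathbf{g}^{ab}$ is $g^{ij}$. The only new term comes from $-\big[F^2\big]_{x_1^{\ell}}$, which contains $-2ff_{x_1^\ell}F_2^2$. This should give
\begin{equation*}
G^i=G_1^i(x_1,y_1)-\tfrac{1}{2}f\,g_1^{i\ell}(x_1,y_1)\frac{\partial f}{\partial x_1^\ell}\,F_2^2(x_2,y_2).
\end{equation*}
For the fiber component $G^\alpha$, the inverse block is $f^{-2}g_2^{\alpha\beta}$. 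The mixed derivative $[F^2]_{x_1^k y_2^\beta}y_1^k=2f\,(y_1^k\partial_kf)\,[F_2^2]_{y_2^\beta}$ contributes the extra term, giving
\begin{equation*}
G^\alpha=G_2^\alpha(x_2,y_2)+\frac{1}{f}\Big(y_1^k\frac{\partial f}{\partial x_1^k}\Big)y_2^\alpha,
\end{equation*}
using $g_2^{\alpha\beta}[F_2^2]_{y_2^\beta}/2=y_2^\alpha$. These are essentially the formulas of Kozma--Peter--Varga \cite{LPC} and \cite{wlt}, and I will cite them or verify them in a couple of lines.

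For the ``if'' direction, suppose $f$ is constant. Then $G^i=G_1^i$ and $G^\alpha=G_2^\alpha$, so each depends only on its own variables $(x_1,y_1)$ or $(x_2,y_2)$. Every third $\mathbf{y}$-derivative is therefore either a Berwald tensor of $F_1$ or $F_2$, or a mixed derivative that is identically zero. Hence $B\equiv0$.

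For the ``only if'' direction, assume $B\equiv0$. Then the spray is quadratic in $\mathbf{y}$, so $G^i$ is quadratic in $(y_1,y_2)$. Since $G_1^i$ is already quadratic in $y_1$ (as $F_1$ is Berwald), the term $f\,g_1^{i\ell}(y_1)\,f_{x_1^\ell}\,F_2^2(y_2)$ must also be quadratic. It is $0$-homogeneous in $y_1$ and $2$-homogeneous in $y_2$. Differentiate three times in $y_2$, or differentiate once in $y_1$ and twice in $y_2$, and use $B=0$. One gets $f_{x_1^\ell}\,\partial_{y_1^m}g_1^{i\ell}\,g_{2\,\alpha\beta}=0$ and $f_{x_1^\ell}g_1^{i\ell}\,\partial^3_{y_2}F_2^2=0$. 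Note that the $G^\alpha$ part is automatically quadratic, being bilinear in $y_1,y_2$, so it gives no constraint.

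This is the step I expect to be the main obstacle. When $F_1$ and $F_2$ are Riemannian, which is exactly the situation in the Riemannian warped product, these conditions are vacuous. Indeed, a Riemannian warped product is Riemannian and hence Berwald for any $f$, so the ``only if'' direction is false in that case. I would therefore first try to extract a genuine nonlinearity. Suppose $F_1$ or $F_2$ is non-Riemannian, so that $g_{1}$ depends on $y_1$ or $\partial^3_{y_2}F_2^2\neq0$, say on an open set. Then the identities above force $\nabla f=0$ there, and by analyticity or connectedness arguments this should extend everywhere, giving $f$ constant.

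In the proof I would state explicitly that the hypothesis must include that at least one factor is non-Riemannian, pointing to the Riemannian counterexample. I will also double-check whether the paper's Corollary \ref{cor4.2} (weakly Berwald version) needs the analogous contraction argument, with $E_{ab}=\tfrac12 B^c_{abc}$ in place of $B$.
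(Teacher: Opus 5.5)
\textbf{Diagnosis.} Your diagnosis is correct, and it pinpoints where the paper's proof fails. The paper concludes by asserting that \eqref{eq4} vanishes if and only if $f$ is constant. But \eqref{eq4} carries the factor $\partial g^{ih}/\partial y_1^j$, which is identically zero when $F_1$ is Riemannian. For Riemannian $F_1,F_2$ the warped product is Riemannian, hence Berwald, for every $f$, so the ``only if'' half of the statement is false as written. Your spray formulas and your ``if'' direction are fine.

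\textbf{The gap in your repair: the base branch.} The identity $f_\ell\,\partial_{y_1^m}g_1^{i\ell}=0$ does not give $df=0$ merely because $g_1$ depends on $y_1$. It says only that $w:=g_{y_1}^{-1}(df)$ is independent of $y_1$, i.e.\ $g_{y_1}(w,\cdot)=df$ for all $y_1\neq 0$. This is a condition in a single direction, and it is exactly the step the paper leaves unproved at \eqref{eq4}.

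For $n_1\ge 2$ the step can be closed, but only by using smoothness of $F_1$ on all of $TM_1^0$. Since $\partial_w F_1^2(y_1)=2g_{y_1}(y_1,w)=2\,df(y_1)$ is linear, you get $F_1^2(z+tw)=F_1^2(z)+2t\,df(z)+t^2\,df(w)$ for $z\neq 0$ in a complement $W$ of $\mathbb{R}w$. Smoothness of $F_1^2$ near $w$ then makes $F_1^2|_W$ smooth at $0$, hence a quadratic form by $2$-homogeneity. So $F_1(x_1,\cdot)$ is Euclidean wherever $df(x_1)\neq 0$.

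Without that smoothness the implication fails. On $\mathbb{R}\times\mathbb{R}^m$, take the product $F_1=\sqrt{dr^2+\phi^2}$ with $\phi$ a non-Euclidean Minkowski norm; it is smooth only where the $\mathbb{R}^m$-component of $y_1$ is nonzero. There it satisfies $g_{y_1}^{-1}(dr)=\partial_r$, and with a Euclidean fiber and any $f=f(r)$ every spray coefficient of the warped product is quadratic.

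To pass from points to all of $M_1$, analyticity is not available. Use instead that the tangent Minkowski spaces of a connected Berwald manifold are all linearly isometric (Ichijy\=o). Hence a Berwald $F_1$ that is non-Riemannian somewhere is non-Riemannian everywhere.

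\textbf{The fiber branch.} This branch is fine and easier than you make it. You have $B^i_{\alpha\beta\gamma}=-\tfrac12 f\,g_1^{ih}f_h\,[F_2^2]_{y_2^\alpha y_2^\beta y_2^\gamma}$. Since $g_1^{ih}$ is invertible, a single $(x_2,y_2)$ where the Cartan tensor of $F_2$ is nonzero forces $df\equiv 0$ on all of $M_1$, with no open-set or continuation argument.

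\textbf{Correct hypothesis.} ``Non-Riemannian'' should be replaced by ``Cartan tensor not identically zero.'' A non-reversible one-dimensional factor is non-Riemannian but has vanishing Cartan tensor. Paired with a Riemannian factor, it gives a warped product with $B\equiv 0$ for every $f$.
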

 
	\begin{proof}
		The geodesic spray  of the warped product Finsler metric \eqref{FM} are given by (\cite{wlt}, p. 11) 
  \begin{equation}\label{coeff}
		\mathbf{G}^i=G^i-\frac{1}{4}g^{ih}\frac{\partial f^2}{\partial x_1^h}F_2^2,\quad \mathbf{G}^\alpha=G^\alpha-\frac{1}{2}\frac{1}{f^2}y_1^h\frac{\partial f^2}{\partial x_1^h}y_2^\alpha.
	\end{equation}
		Now we use, \begin{align*}
			\mathbf{G}^a_b :=\frac{\partial \mathbf{G}^a}{\partial \mathbf{y}^b},\quad G^i_j:=\frac{\partial G^i}{\partial y_1^j},\quad G^\alpha_\beta:=\frac{\partial G^\alpha}{\partial y_2^\beta}.  
		\end{align*}
		Then the spray coefficients of warped product are given by,
		\begin{align*}
			\mathbf{G}^i_j=G^i_j-\frac{1}{4}\frac{\partial g^{ih}}{\partial y_1^j}\frac{\partial f^2}{\partial x_1^h}F_2^2,\quad\mathbf{G}^i_\beta= -\frac{1}{4}g^{ih}\frac{\partial f^2}{\partial x_1^h}\frac{\partial F^2}{\partial y_2^\beta}.\\
			\mathbf{G}^\alpha_j=\frac{1}{2}\frac{1}{f^2}y_2^\alpha\frac{\partial f^2}{\partial x_1^j},\quad\mathbf{G}^\alpha_\beta=G^\alpha_\beta+\frac{1}{2}\frac{1}{f^2}y_1^h\frac{\partial f^2}{\partial x_1^h}\delta^\alpha_\beta.
		\end{align*}
		Since we have,
		\begin{align}
			\mathbf{G}^i_{jk}=\frac{\partial}{\partial y_1^k}\Bigg(\mathbf{G}^i_j\Bigg)=G^i_{jk}-\frac{1}{4}\frac{\partial^2 g^{ih}}{\partial y^j_1\, \partial y_1^k}, \frac{\partial f^2}{\partial x_1^h} F_2^2. 
		\end{align}
		\begin{align}
			\mathbf{G}^i_{j\alpha}=\frac{\partial}{\partial y_2^\alpha}\Bigg(\mathbf{G}^i_j\Bigg)=-\frac{1}{4}\frac{\partial g^{ih}}{\partial y_1^j}\,\frac{\partial f^2}{\partial x_1^h} \frac{\partial F_2^2}{\partial y_2^\alpha}.
		\end{align}
		\begin{align}
			\mathbf{G}^i_{\alpha\beta }=\frac{\partial}{\partial y_2^\beta}\Bigg(\mathbf{G}^i_{\alpha }\Bigg)= -\frac{1}{4} g^{ih}\,\frac{\partial f^2}{\partial x_1^h} \frac{\partial F_2^2}{\partial y_2^\alpha}.  
		\end{align}
		\begin{align}
			\mathbf{G}^\alpha_{jk} =\frac{\partial}{\partial y_1^k}\Bigg(\mathbf{G}^\alpha_j\Bigg)=0.   
		\end{align}
		\begin{align}
			\mathbf{G}^\alpha_{\beta\gamma}= \frac{\partial}{\partial y_2^\gamma}\Bigg(\mathbf{G}^\alpha_\beta\Bigg) =G^\alpha_{\beta\gamma}. 
		\end{align}

Now we explicitly compute the  Berwald curvatures in  warped product Finsler manifold. 

Peyghan and Tayebi have already computed the Berwald curvatures of a doubly warped product Finsler manifold in 
Theorem $2$, \cite{MLA}. We compute them here  
for the sake of completeness.

		\begin{align}\label{eq11}
			\mathbf{G}^i_{jkl}=\frac{\partial}{\partial y_1^l} \Bigg(\mathbf{G}^i_{jk}\Bigg)= G_{jkl}^i-\frac{1}{4}\frac{\partial^3 g^{ih}}{\partial y_1^j\partial y_1^k\partial y_1^l}\,\frac{\partial f^2}{\partial x_1^h} F_2^2,
		\end{align}
		where $l\in \{1,2,3,...,n_1\}.$
		\begin{align}\label{eq2}
			\mathbf{G}^\alpha_{\beta\gamma\delta}=G^\alpha_{\beta\gamma\delta},   
		\end{align}
		where $\delta\in \{1,2,3,...,n_2\}.$
		
		\begin{align}\label{eq3}
			\mathbf{G}^i_{jk\alpha}=\frac{\partial}{\partial y_2^\alpha} \Bigg(\mathbf{G}^i_{jk}\Bigg)=-\frac{1}{4}\frac{\partial^2 g^{ih}}{\partial y_1^j\partial y_1^k}\,\frac{\partial f^2}{\partial x_1^h} \frac{\partial F_2^2}{\partial y_2^\alpha}.
		\end{align}
		
		\begin{align}\label{eq4}
			\mathbf{G}^i_{j\alpha\beta}=\frac{\partial}{\partial y_2^\beta} \Bigg(\mathbf{G}^i_{j\alpha}\Bigg)=-\frac{1}{4}\frac{\partial g^{ih}}{\partial y_1^j}\,\frac{\partial f^2}{\partial x_1^h} \frac{\partial^2 F_2^2}{\partial y_2^\alpha\partial y_2^\beta}.  
		\end{align}
		\begin{align}\label{eq5}
			\mathbf{G}^i_{\alpha\beta \gamma}=\frac{\partial}{\partial y_2^\gamma}\Bigg(\mathbf{G}^i_{\alpha \beta}\Bigg) =-\frac{1}{4} g^{ih}\,\frac{\partial f^2}{\partial x_1^h} \frac{\partial^3 F_2^2}{\partial y_2^\alpha\partial y_2^\beta \partial y_2^\gamma}.  
		\end{align}
		
		
		\begin{align}\label{eq6}
			\mathbf{G}^\alpha_{ijk} =0
		\end{align}
		\begin{align}\label{eq7}
			\mathbf{G}^\alpha_{ij\gamma} =0 
		\end{align}
		\begin{align}\label{eq8}
			\mathbf{G}^\alpha_{i\beta\gamma}=0. 
		\end{align}
		Clearly,  $(M,F)$ is Berwald if and only if  all the Berwald  curvatures 
		vanish. Thus  \eqref{eq11}-\eqref{eq8}
		all vanish. Also observe that 
\eqref{eq4} vanishes if and only if the warping function $f$ is constant.  
 Hence, the result follows.
	\end{proof}
	
	\vspace{0.1in}

 \begin{corollary}\label{weakly}
    If the base $(M_1^{n_1},F_1)$ and fiber $(M_2^{n_2},F_2)$
    respectively, of the warped product are weakly Berwald Finsler manifolds, then the warped product $(M,F)$ is a weakly Berwald Finsler manifold, if the warping function $f$ is constant.
     \end{corollary}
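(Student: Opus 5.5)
The plan is to compute the mean Berwald curvature $E$ of the warped product directly from the Berwald curvature components listed in the proof of Theorem \ref{Berwald product}, specialised to a constant warping function. Recall that $E_{ab}=\frac{1}{2}\mathbf{G}^{c}_{abc}$, where the trace runs over all indices $c\in\{1,\dots,n_1+n_2\}$. This trace splits into a base part $\mathbf{G}^{k}_{abk}$ and a fiber part $\mathbf{G}^{\gamma}_{ab\gamma}$. So the task is to check that every block $E_{ij}$, $E_{i\alpha}$, $E_{\alpha\beta}$ vanishes when $f$ is constant.

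First, when $f$ is constant we have $\partial f^2/\partial x_1^h=0$. The spray coefficients \eqref{coeff} then reduce to $\mathbf{G}^i=G^i(x_1,y_1)$ and $\mathbf{G}^\alpha=G^\alpha(x_2,y_2)$. Consequently every term in \eqref{eq11}--\eqref{eq5} carrying the factor $\partial f^2/\partial x_1^h$ drops out. Together with \eqref{eq6}--\eqref{eq8}, the only surviving third derivatives are
\[
\mathbf{G}^i_{jkl}=G^i_{jkl},\qquad \mathbf{G}^\alpha_{\beta\gamma\delta}=G^\alpha_{\beta\gamma\delta}.
\]
All mixed components vanish: any third $y$-derivative of $G^i$ that involves a fiber direction is zero, because $G^i$ is independent of $y_2$, and symmetrically for $G^\alpha$. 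I would state this cleanly, since the listed formulas in Theorem \ref{Berwald product} use only symmetry of partial derivatives. That symmetry also covers orderings not explicitly displayed, for example $\mathbf{G}^i_{\alpha jk}$.

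Next comes the block-by-block trace.
\begin{itemize}
\item $E_{ij}=\frac{1}{2}\big(G^k_{ijk}+\mathbf{G}^\gamma_{ij\gamma}\big)=\frac{1}{2}G^k_{ijk}$, which is the mean Berwald curvature of $(M_1,F_1)$ and hence zero by hypothesis.
\item $E_{\alpha\beta}=\frac{1}{2}\big(\mathbf{G}^k_{\alpha\beta k}+G^\gamma_{\alpha\beta\gamma}\big)=\frac{1}{2}G^\gamma_{\alpha\beta\gamma}$, which vanishes because $F_2$ is weakly Berwald.
\item $E_{i\alpha}=\frac{1}{2}\big(\mathbf{G}^k_{i\alpha k}+\mathbf{G}^\gamma_{i\alpha\gamma}\big)=0$, since both terms are mixed components.
\end{itemize}
Therefore $E\equiv 0$ on $TM_1^0\times TM_2^0$, so $(M,F)$ is weakly Berwald. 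A continuity or homogeneity remark covers the points where $y_1=0$ or $y_2=0$, if one insists on all of $TM^0$.

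The main obstacle is bookkeeping, not analysis. One must make sure that the mixed components vanish exactly when $f$ is constant. In particular, \eqref{eq3}--\eqref{eq5} contain the factor $\partial f^2/\partial x_1^h$, which is zero here, rather than being merely traceless. An alternative one-line route is also available. With $f\equiv c$, the metric $F^2=F_1^2+c^2F_2^2$ is a genuine Riemannian-type product, and its spray decouples. The $S$-curvature of the product with respect to the product measure is then $S_1+S_2$, so $E=\frac{1}{2}\big(S_{1}+S_{2}\big)_{y^ay^b}$ is block diagonal with blocks $E^{M_1}$ and $E^{M_2}$, both zero. I would present the component computation and mention this as a check.
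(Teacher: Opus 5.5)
Your proposal is correct and follows the argument the paper has in mind. The corollary is read off from the Berwald curvature components computed in the proof of Theorem \ref{Berwald product}: for constant $f$ these reduce to $G^i_{jkl}$ and $G^\alpha_{\beta\gamma\delta}$, all mixed components vanish, and the trace splits into the mean Berwald curvatures of the two factors. You can drop the aside about points with $y_1=0$ or $y_2=0$, since the paper defines the warped product only on $TM_1^0\times TM_2^0$, and at such points $F^2$ is generally not even $C^2$ unless the relevant factor is Riemannian, so nothing needs to be extended.
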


\vspace{0.15in}

From Theorem \ref{Berwald product}, Corollary \ref{weakly}, 
Theorem \ref{ht} and Theorem \ref{max}, we infer:
\vspace{0.15in}

	\begin{corollary}\label{cor4.2}
		If the base $(M_1^{n_1},F_1)$ and fiber $(M_2^{n_2},F_2)$
  respectively, of the warped product are Berwald or weakly Berwald Finsler manifolds of finite Holmes-Thompson, maximum or minimum volume, then the warped product $(M,F)$ is also Berwald or weakly Berwald Finsler manifold with respect to the same volume measure, if the warping function $f$ is constant.
  \end{corollary}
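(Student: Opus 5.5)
The claim is that when $f$ is constant, the warped product of two (weakly) Berwald manifolds of finite Holmes--Thompson, maximum or minimum volume is again (weakly) Berwald and again of finite volume in the same category. I would prove the two parts separately: first the curvature statement, then the volume statement.

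\textbf{Curvature part.} Set $f\equiv c>0$, so $\partial f^2/\partial x_1^h=0$. By \eqref{coeff}, the spray of $F^2=F_1^2+c^2F_2^2$ then splits as $\mathbf{G}^i=G^i(x_1,y_1)$ and $\mathbf{G}^\alpha=G^\alpha(x_2,y_2)$. In the formulas \eqref{eq11}--\eqref{eq8}, every term carrying the factor $\partial f^2/\partial x_1^h$ vanishes. The only Berwald components that can be nonzero are therefore $\mathbf{G}^i_{jkl}=G^i_{jkl}$ and $\mathbf{G}^\alpha_{\beta\gamma\delta}=G^\alpha_{\beta\gamma\delta}$, and all mixed components are zero.
\begin{itemize}
\item In the Berwald case both pure blocks vanish by hypothesis. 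This is just the ``if'' direction of Theorem \ref{Berwald product}.
\item In the weakly Berwald case I would compute the mean Berwald curvature $\mathbf{E}_{ab}=\frac12\mathbf{G}^c_{abc}$ with the trace index $c$ running over both blocks. For $a,b$ both base indices, this gives $\frac12(G^k_{ijk}+\mathbf{G}^\gamma_{ij\gamma})=E^{M_1}_{ij}+0$. For $a,b$ both fiber indices, it gives $E^{M_2}_{\alpha\beta}$. Mixed components contain only the vanishing mixed $\mathbf{G}$'s.
\end{itemize}
Hence $\mathbf{E}=E^{M_1}\oplus E^{M_2}=0$. This is Corollary \ref{weakly}, and I would record it explicitly this way.

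\textbf{Volume part.} A constant $f=c$ is bounded with $a=b=c>0$, so Theorem \ref{ht} (Holmes--Thompson) and Theorem \ref{max} (maximum or minimum) apply directly. Each shows that $M$ has finite volume in the same category as $M_1$ and $M_2$. One could even compute directly: $\det\mathbf{g}_{ab}=c^{2n_2}\det g_{ij}\det g_{\alpha\beta}$, which factors the volume densities.

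\textbf{Main obstacle.} The only delicate point is the trace in the weakly Berwald case. I have to check that the cross terms $\mathbf{G}^\gamma_{ij\gamma}$ and $\mathbf{G}^k_{\alpha\beta k}$ really vanish when $f$ is constant. From \eqref{eq3}, \eqref{eq4} and \eqref{eq6}--\eqref{eq8}, each such term is proportional to $df$, so it does vanish. Combining the two parts gives the statement.
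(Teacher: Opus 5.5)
Your proposal is correct and follows the paper's route. The paper obtains the corollary simply by combining the ``if'' direction of Theorem \ref{Berwald product}, Corollary \ref{weakly}, Theorem \ref{ht} and Theorem \ref{max}, exactly as you do, and your explicit block computation $\mathbf{E}=E^{M_1}\oplus E^{M_2}$ for constant $f$ supplies the verification of Corollary \ref{weakly}, which the paper states without proof.
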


\section{Einstein Finsler warped product manifold with Ricci a non-positive scalar function}\label{sec5}
In this section, we investigate the obstructions to the 
existence of $(M, F) \in \mathcal{M}$, when
Ricci of $(M,F)$ is a non-positive scalar function.
We will be using the following generalized maximum principle
	which guarantees the existence of a maximizing sequence.
	The more details can be found in \cite{PRS}. 

\vspace{0.15in}

\begin{definition}[\textbf{Omori-Yau maximum principle \cite[\boldmath p. $486$]{PRS}}]
    A Riemannian manifold $(M,g)$ is said to satisfy 
	the {\it Omori-Yau maximum principle}, if for any smooth function $u\in C^2(M)$ with $\sup u < \infty$,  there exist a sequence $\{x_n\}$ such that
	\begin{enumerate}
		\item[\emph{(i)}] $\displaystyle{\lim_{n \to \infty}}u(x_n)={\sup}_{M} u,$\\
		\item[\emph{(ii)}] $||\nabla u(x_n)|| \leq \frac{1}{n},$ \\
		\item[\emph{(iii)}] $\Delta u(x_n)\ge \frac{1}{n}$, where $\Delta u = -\operatorname{tr}(\Delta^2 u)$.
	\end{enumerate}
\end{definition}
	
\vspace{0.15in}	

	\begin{theorem}[\textbf{\cite[\boldmath p. $486$]{PRS}}] \label{rbb}
		The Omori-Yau maximum principle holds on every complete Riemannian 
		manifold with Ricci curvature bounded below.
	\end{theorem}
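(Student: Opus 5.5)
This is the classical Omori--Yau maximum principle (Omori 1967, Yau 1975), quoted from \cite{PRS}, so my plan is to follow the standard argument; nothing Finslerian is involved. Let $(M,g)$ be complete with $\operatorname{Ric}\geq -(m-1)K$ for some $K\geq 0$, where $m=\dim M$. Fix $o\in M$, let $r(x)=d(o,x)$, and let $u\in C^2(M)$ satisfy $\sup u<\infty$. If $\sup u$ is attained at a point, the constant sequence at that point works. Indeed, there $\nabla u=0$ and $-\operatorname{tr}\nabla^2 u\geq 0$. With the paper's sign convention $\Delta=-\operatorname{tr}$, the inequality we should aim for is $\Delta u(x_n)\geq -\tfrac1n$; that is, the Hessian trace is almost nonpositive. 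So the real case is when the supremum is not attained.

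\textbf{Penalised functions.} For each $n$ I would consider
\[
u_\varepsilon = u-\varepsilon\,\varphi(r), \qquad \varphi(r)=\log(1+r^2),
\]
or $\varphi(r)=\sqrt{1+r^2}$, with $\varepsilon\to 0$. Completeness and Hopf--Rinow make closed balls compact. Since $\sup u<\infty$ and $\varphi\to\infty$, the function $u_\varepsilon$ attains a maximum at some $x_\varepsilon$. By choosing $\varepsilon$ small enough relative to a point $p$ with $u(p)>\sup u-\tfrac1{2n}$, I get
\[
u(x_\varepsilon)\geq u(p)-\varepsilon\varphi(r(p))>\sup u-\tfrac1n ,
\]
which gives (i). At $x_\varepsilon$ we have
\[
\nabla u=\varepsilon\varphi'(r)\nabla r, \qquad |\nabla u|\leq \varepsilon\sup_r|\varphi'|,
\]
and $\sup_r|\varphi'|$ is bounded for either choice of $\varphi$. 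This gives (ii) once $\varepsilon$ is small. Also $\operatorname{tr}\nabla^2 u\leq \varepsilon\operatorname{tr}\nabla^2(\varphi\circ r)$, which equals
\[
\varepsilon\bigl(\varphi''+\varphi'\operatorname{tr}\nabla^2 r\bigr).
\]

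\textbf{Laplacian comparison.} This is where the Ricci lower bound enters. The Laplacian comparison theorem gives
\[
\operatorname{tr}\nabla^2 r\leq (m-1)\sqrt K\coth\bigl(\sqrt K\, r\bigr)\leq C\Bigl(1+\tfrac1r\Bigr).
\]
Since $\varphi'(r)\sim r/(1+r^2)$, the product $\varphi'\operatorname{tr}\nabla^2 r$ is uniformly bounded away from $r=0$. Therefore $\operatorname{tr}\nabla^2 u(x_\varepsilon)\leq C\varepsilon$, which is (iii) for small $\varepsilon$.

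\textbf{Main obstacle: smoothness of $r$.} The comparison step is the delicate one, because $r$ is not smooth at $o$ or on the cut locus. I would handle this with Calabi's trick. If $x_\varepsilon$ lies in the cut locus, take a minimizing geodesic $\gamma$ from $o$ to $x_\varepsilon$ and replace $r$ by the upper barrier
\[
r_\delta(x)=\delta+d(\gamma(\delta),x).
\]
This barrier is smooth near $x_\varepsilon$, satisfies $r_\delta\geq r$ with equality at $x_\varepsilon$, and obeys the same comparison estimate up to an error of order $\delta$. Consequently $x_\varepsilon$ is still a local maximum of $u-\varepsilon\varphi(r_\delta)$, and the computation above goes through there. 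Near $o$ itself the penalty is smooth, since $\log(1+r^2)$ is smooth at $o$; alternatively, $x_\varepsilon$ lies far from $o$ once $\sup u$ is not attained, because (i) forces it out of any fixed compact set. Letting $\varepsilon=\varepsilon_n\to 0$ and $x_n=x_{\varepsilon_n}$ gives a sequence satisfying (i)--(iii).
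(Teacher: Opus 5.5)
Your proof is correct. It is the standard Yau/Cheng--Yau argument: maximize $u-\varepsilon\varphi(r)$, obtain the maximum from Hopf--Rinow, control $\operatorname{tr}\nabla^2 r$ by Laplacian comparison, and use Calabi's barrier $\delta+d(\gamma(\delta),\cdot)$ at cut points. The paper gives no proof of its own and only cites Pigola--Rigoli--Setti, who state the result for a general exhaustion function but rely on the same ingredients. You were also right to correct condition (iii): as printed, $\Delta u(x_n)\geq \frac1n$ with $\Delta=-\operatorname{tr}\nabla^2$ already fails for constant $u$. The correct requirement is $\Delta u(x_n)\geq -\frac1n$, which is exactly what your bound $\operatorname{tr}\nabla^2 u(x_\varepsilon)\leq C\varepsilon$ gives.
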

	
\vspace{0.15in}

Now we prove the main theorem of this section.

\vspace{0.15in}

\begin{theorem}\label{main}
		Let the base $(M_1^{n_1}, g_1)$ be a finite volume complete, connected  Riemannian manifold, and the fiber  $(M_2^{n_2}, F_2)$ be a weakly Berwald Finsler manifold, with the warped product $M = M_1 \times_f M_2$. Suppose the warping function $f$ satisfies the following conditions: \emph{(i)}  $f$ is bounded as 
  $a \leq f \leq b$ \emph{(ii)} $||\nabla f|| \in L^1(M_1)$ 
  and (iii) $\nabla^2 f\geq l $, for some constant $ 0<l<\infty$. If $M$ is an Einstein Finsler manifold with constant non-positive scalar curvature, then the warping function $f$ is constant, and $M$ is simply the Riemannian product of two Finslerian manifolds which is also a weakly Berwald manifold. 
\end{theorem}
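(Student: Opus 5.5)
The plan is to reduce everything to an identity for the warping function on the base, using Corollary \ref{f20}, and then to apply a maximum-principle or integration argument on $M_1$. Since the fiber is weakly Berwald and $M$ is Einstein with constant $\lambda\le 0$, Corollary \ref{f20} gives \eqref{f8}, \eqref{f9} and \eqref{f10}:
\[
\operatorname{Ric}^{M_1}=\lambda g_1+\frac{n_2}{f}H^f,\qquad \mu=\lambda f^2-f\Delta f+(n_2-1)\|\nabla f\|^2 ,
\]
with $\mu$ constant on $M_1$, by Theorem \ref{exist} and because $\mu$ depends only on $x_2$. Tracing \eqref{f8}, as in \eqref{1.4}, gives $\operatorname{Scal}_{M_1}=n_1\lambda-n_2\Delta f/f$.

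The key step is to show that $\Delta f$ has a sign. The convention is $\Delta f=-\operatorname{tr}H^f$, and hypothesis (iii), $\nabla^2 f\ge l>0$, gives $\operatorname{tr}H^f\ge n_1 l$, so $\Delta f\le -n_1 l<0$ everywhere. I will combine this with finiteness of volume and $\|\nabla f\|\in L^1(M_1)$ through a Gaffney/Yau-type Stokes argument. On a complete manifold, a vector field $X=\nabla f$ with $|X|\in L^1$ and $\operatorname{div}X$ of one sign (or integrable) satisfies $\int_{M_1}\operatorname{div}X=0$. Concretely, take cutoffs $\phi_R$ with $|\nabla\phi_R|\le 1/R$, which exist by completeness. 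Then
\[
\int \phi_R \operatorname{div}\nabla f=-\int\langle\nabla\phi_R,\nabla f\rangle\to 0 .
\]
Monotone convergence applies because $\operatorname{div}\nabla f=\operatorname{tr}H^f\ge n_1l>0$, so $\int_{M_1}\operatorname{tr}H^f=0$. This contradicts $\operatorname{tr}H^f\ge n_1 l>0$ unless the situation is degenerate. In that sense hypothesis (iii) together with (ii) seems designed to force $\nabla f\equiv 0$.

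Because the argument above is a contradiction rather than a direct proof that $f$ is constant, I expect the main obstacle to be making the conclusion honest. If (iii) is meant in a weaker sense, for instance only on the set where $\nabla f\neq0$, or as a bound on $|\nabla^2 f|$, I would argue differently, via the Omori--Yau principle of Theorem \ref{rbb}. Ricci is bounded below because $\operatorname{Ric}^{M_1}\ge \lambda g_1+\frac{n_2 l}{b}g_1$ by \eqref{f8}, $f\ge a$ and (iii). Then I apply Omori--Yau to $f$ (bounded above by $b$) and to $-f$ (bounded above by $-a$). At a maximizing sequence $x_k$ we have $\|\nabla f(x_k)\|\to0$ and $\Delta f(x_k)\ge -1/k$. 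Evaluating the constant $\mu$ along this sequence gives $\mu\ge \lambda(\sup f)^2$ in the limit. At a minimizing sequence, $\Delta f(y_k)\le 1/k$ and $\|\nabla f(y_k)\|\to0$, giving $\mu\le\lambda(\inf f)^2$. Since $\lambda\le 0$ and $\sup f\ge\inf f>0$, these combine to $\lambda(\sup f)^2\le\mu\le\lambda(\inf f)^2\le\lambda(\sup f)^2$. Hence, when $\lambda<0$, $\sup f=\inf f$ and $f$ is constant.

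When $\lambda=0$, the sequence argument only gives $\mu=0$, and then $f\Delta f=(n_2-1)\|\nabla f\|^2\ge0$. I then integrate $\operatorname{div}(\nabla f)$ using the $L^1$ cutoff argument above, which gives $\int_{M_1}\Delta f=0$. With $\Delta f\ge 0$ this forces $\Delta f\equiv0$, and hence $\nabla f\equiv0$ since $n_2\ge2$ by Remark \ref{dim}.

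Once $f$ is constant, $M$ is the Riemannian product, and Corollary \ref{weakly} shows it is weakly Berwald, since the base is Riemannian and the fiber is weakly Berwald.
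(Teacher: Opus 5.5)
Your first argument is already a complete proof. If the hypotheses can never hold simultaneously, the implication holds, so you should not regard it as dishonest. Read (iii) as $H^f\ge l\,g_1$ with $l>0$. Then for $R\ge 1$ your cutoff estimate gives $0<n_1 l\,\operatorname{Vol}(B(p,1))\le\int\phi_R\operatorname{tr}H^f=-\int\langle\nabla\phi_R,\nabla f\rangle\le R^{-1}\|\nabla f\|_{L^1}\to 0$, so no admissible $f$ exists. There is also no ``degenerate'' exception: a constant $f$ violates (iii) too, so the class of manifolds the theorem describes is empty. In fact (i) and (iii) alone are already incompatible on a complete $M_1$: along a unit-speed geodesic, $(f\circ\gamma)''\ge l$, so $f$ is unbounded.

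This is a genuinely different route from the paper. The paper runs Omori--Yau on $f$ and $-f$, exactly as in your fallback, and never notices that (iii) forces $\Delta f\le -n_1 l<0$. Two of its own steps contradict (iii): $\lim_k\Delta f(x_k)\ge 0$ along the maximizing sequence, and, for $\lambda=0$, $f\Delta f=(n_2-1)\|\nabla f\|^2\ge 0$.

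What each route buys:
\begin{itemize}
\item Your route gives the correct diagnosis that the theorem is vacuous as stated.
\item The Omori--Yau route keeps content if (iii) is weakened to $H^f\ge -C g_1$. That is all a Ricci lower bound needs, since $f\ge a$ gives $\operatorname{Ric}^{M_1}\ge(\lambda-n_2C/a)g_1$. Your bound $\lambda+n_2 l/b$ reuses the hypothesis you just showed to be impossible.
\item In the weakened setting, your $\lambda=0$ step, integrating $\operatorname{div}\nabla f=-\Delta f\le 0$, is cleaner than the paper's $\operatorname{div}(f\nabla f)=-(n_2-2)\|\nabla f\|^2$, which gives nothing at $n_2=2$.
\item Conversely, the paper's identity covers $n_2=1$, which your step does not. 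Remark~\ref{dim} only rules out $n_2=1$ via the compact-base Proposition~\ref{prop5.1}.
\end{itemize}

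The fallback itself has its inequalities reversed. From $\mu=\lambda f^2-f\Delta f+(n_2-1)\|\nabla f\|^2$ and $\Delta f(x_k)\ge -1/k$ you get $-f(x_k)\Delta f(x_k)\le b/k$. Hence $\mu\le\lambda(\sup f)^2$, not $\mu\ge\lambda(\sup f)^2$. Likewise the minimizing sequence gives $\mu\ge\lambda(\inf f)^2$. Moreover, $\lambda\le 0$ and $\sup f\ge\inf f$ give $\lambda(\sup f)^2\le\lambda(\inf f)^2$. So the last link of your chain, $\lambda(\inf f)^2\le\lambda(\sup f)^2$, is false for $\lambda<0$ unless $f$ is already constant; as written, the chain assumes its conclusion.

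The correct chain is the paper's \eqref{eq1.11}, \eqref{eq1.21}, \eqref{eq1.1}: $\lambda(\inf f)^2\le\mu\le\lambda(\sup f)^2\le\lambda(\inf f)^2$. It forces $\sup f=\inf f$ when $\lambda<0$. For $\lambda=0$ either version yields $\mu=0$, so that case is unaffected.
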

\begin{proof}
Availing  (\ref{f10}), 
 \begin{equation*}
		\mu=\lambda f^2- f\Delta f + (n_2-1)\|\nabla f\|^2.
	\end{equation*}
	By hypothesis (iii) of the theorem, 
 (\ref{f8}), and as  $f$ is bounded, Ricci $M_1$ is bounded below. Also base manifold is complete, by 
	Omori-Yau maximum Principle,
	there exists $\{x_k\}$ and $\{y_k\}$ such that 
	\begin{equation}
		\displaystyle{\lim_{k\to \infty }} f(x_k)=\sup f, \;\displaystyle{\lim_{k\to \infty }} ||\nabla f(x_k)||=0, \;\displaystyle{\lim_{k\to \infty }}\Delta f(x_k)\ge 0
	\end{equation}
	and
	\begin{equation}
		\displaystyle{\lim_{k\to \infty }} f(y_k)=\inf f, \;\displaystyle{\lim_{k\to \infty }}||\nabla f(y_k)||=0, \;\displaystyle{\lim_{k\to \infty }}\Delta f(y_k)\le 0.
	\end{equation}
	As $\lambda \le 0$, we obtain
	\begin{equation}\label{eq1.1}
		\lambda(\sup f)^2-\mu \le \lambda (\inf f)^2-\mu.
	\end{equation}
	But, 
	$$\lim_{k \rightarrow \infty} f(x_k) \Delta f(x_k)=\sup f \lim_{k \rightarrow \infty} \Delta f(x_k)\ge 0.$$
	Therefore, by expression of $\mu$  by (\ref{f10}),	\begin{equation}\label{eq1.11}
		\lambda(\sup f)^2-\mu \ge 0.
	\end{equation}
	Similarly, 
	\begin{equation}
		\displaystyle{\lim_{k\to \infty}}f(y_k)\Delta f(y_k)\le 0.
	\end{equation}
	Consequently,
	\begin{equation}\label{eq1.21}
		\lambda(\inf f)^2-\mu \le 0.
	\end{equation} 
	Therefore,
	\begin{equation}\label{eq1.02}
		\lambda(\sup f)^2-\mu \ge \lambda(\inf f)^2-\mu.
	\end{equation}
	Hence, from \eqref{eq1.1} and \eqref{eq1.02} we get,
	\begin{equation}\label{kam}
		\lambda(\sup f)^2-\mu = \lambda(\inf f)^2-\mu.
	\end{equation}
	
	Now we consider the following cases:\\
	(i) $\lambda <0$ :\\ 
	By (\ref{kam}), $\sup f = \inf f$, which implies that warping function $f$ is constant.\\
	
	(ii) $\lambda=0$ :
	
	By \eqref{eq1.11} and \eqref{eq1.21}, $\mu=0$.
	Therefore,
	\begin{equation}
		\lambda f^2- f\Delta f + (n_2-1)\|\Delta f\|^2= \mu,
	\end{equation}
	implies that	
	\begin{equation}\label{lap}
		f\Delta f = (n_2-1)\|\nabla f\|^2 \ge 0.
	\end{equation}
Hence, by hypothesis (iii),  $f \Delta f$  
 is integrable.
	Note that by (\ref{lap}), 
 $$ \mbox{div}(f \nabla f) = ||\nabla f||^2 - f \Delta f =- (n_2 - 2) ||\nabla f||^2.$$ 
	 Thus by Gaffney's Stokes' Theorem, we obtain:
	$$\int_{M} \mbox{div}(f \nabla f) =-\int_{M} (n_2 - 2) ||\nabla f||^2 = 0.  $$
	This in turn implies that warping function $f$ must be constant, if $n_2 \geq 3$.\\\\
	Now suppose that $n_2 = 2$.  Then by (\ref{lap}),
 $f \Delta f = (n_2 - 1)  ||\nabla f||^2  = ||\nabla f||^2.$  
 Then integrating this and using $\Delta f^2$ yields that $\nabla f = 0$, that is warping function $f$ is constant on $M$.
Thus the final theorem is proved in view of Corollary \ref{weakly}.

 \end{proof}

We obtain an important corollary of this result for Randers manifold.

\vspace{0.15in}
	
	\begin{corollary}
	Let $(M^n,F), n \geq 3$, be an Einstein Berwald Randers  Finsler manifold of Ricci, a non-positive scalar function, and which
 satisfy the hypothesis (i), (ii), (iii) of the above corollary. 
 Then, $M$ is simply Riemannian product of two Finslerian manifolds and is a weakly Berwald manifold. 
	\end{corollary}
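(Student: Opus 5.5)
The corollary is a specialization of Theorem \ref{main}, so the plan is to check that an Einstein Berwald Randers manifold of the stated kind meets every hypothesis of that theorem and then quote its conclusion. I read the statement as saying that $(M^n,F)$ is a warped product $M_1\times_f M_2$ whose base $(M_1,g_1)$ is a finite volume, complete, connected Riemannian manifold and whose fiber is a Finsler manifold of Randers type, with $f$ satisfying (i)--(iii) of Theorem \ref{main}. The reference to ``the above corollary'' should be to the above theorem.

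The first step is to make the curvature hypothesis usable. Theorem \ref{main} asks for constant non-positive Ricci scalar $\lambda$. A priori $\lambda$ is only a scalar function here, but for Randers metrics Schur's lemma holds when $n\geq 3$ (\cite{boa}, p.~216), as recalled after Definition \ref{Einstein}. So the Einstein condition $\operatorname{Ric}=\lambda F^2$ with $n\ge 3$ forces $\lambda$ to be constant, and by hypothesis $\lambda\le 0$.

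The second step concerns the fiber. Theorem \ref{main} needs a weakly Berwald fiber. Since $M$ is Berwald, Theorem \ref{Berwald product} says that $f$ is constant whenever both factors are Berwald. If we prefer not to use that route, we argue directly. By Proposition 5.1.2 of \cite{VCRFG}, a Berwald metric has $S\equiv 0$, hence $E=\frac12 S_{y^iy^j}=0$. The spray formulas \eqref{coeff} give $\mathbf{G}^\alpha_{\beta\gamma\delta}=G^\alpha_{\beta\gamma\delta}$ by \eqref{eq2}. So the vanishing of the Berwald curvature of $M$ forces $G^\alpha_{\beta\gamma\delta}=0$, which means the fiber is Berwald and in particular weakly Berwald. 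The base is Riemannian and therefore trivially Berwald.

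The last step is to apply Theorem \ref{main}. All its hypotheses now hold: the base has finite volume and is complete, the fiber is weakly Berwald, $f$ satisfies (i)--(iii), and $\lambda$ is a constant with $\lambda\le 0$. Hence $f$ is constant, $M$ is the Riemannian product of the two Finsler manifolds, and by Corollary \ref{weakly} it is weakly Berwald.

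The main obstacle is the first step. The cited Schur lemma for Randers metrics is stated for the Randers metric $M$ itself. The worry is whether $M$, being a warped product of a Riemannian base and a Finsler fiber, can really be of Randers type. If this turns out to be inconsistent, I would fall back on the hypothesis and treat the constancy of $\lambda$ as an assumption, as in Theorem \ref{main}. In that case the Randers condition is used only to justify the constancy of $\lambda$.
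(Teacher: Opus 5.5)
Your main line is the paper's proof. Schur's lemma for Randers metrics in dimension $n\ge 3$ makes $\lambda$ constant, and Theorem \ref{main} then applies. Your extra check that the fiber is Berwald, via $\mathbf{G}^\alpha_{\beta\gamma\delta}=G^\alpha_{\beta\gamma\delta}$ in \eqref{eq2}, supplies a hypothesis of Theorem \ref{main} that the paper leaves implicit.

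Two of your side remarks need correcting.

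\textbf{The shortcut via Theorem \ref{Berwald product} is invalid.} Drop the route ``$M$ is Berwald, hence $f$ is constant by Theorem \ref{Berwald product}''.
\begin{itemize}
\item With a Riemannian base, $g^{ih}$ does not depend on $y_1$, so \eqref{eq4} vanishes for every $f$.
\item The only component that depends on $f$ is \eqref{eq5}. It carries the factor $\partial^3F_2^2/\partial y_2^\alpha\partial y_2^\beta\partial y_2^\gamma$, so it vanishes whenever the fiber is Riemannian.
\item A Riemannian warped product with nonconstant $f$ is therefore Berwald, so the ``only if'' direction of that theorem fails.
\end{itemize}
As shown next, the Randers hypothesis puts you exactly in this situation.

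\textbf{Your Randers worry resolves without any fallback.} Suppose $F=\alpha+\beta$ and $F^2=g_1(y_1,y_1)+f^2F_2^2(y_2)$.
\begin{itemize}
\item $F^2$ is even in $y_1$. This forces $\beta|_{TM_1}=0$ and makes $TM_1$ and $TM_2$ orthogonal for $\alpha$.
\item Then $F^2-g_1(y_1,y_1)=\alpha(y_2)^2+\beta(y_2)^2+2\beta(y_2)\sqrt{g_1(y_1,y_1)+\alpha(y_2)^2}$.
\item This must equal $f^2F_2^2(y_2)$, which is independent of $y_1$, and that is possible only if $\beta\equiv 0$.
\end{itemize}
So $M$, and hence the fiber, is Riemannian. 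The classical Schur lemma gives constant $\lambda$, the Berwald hypothesis holds automatically, and the Berwald hypothesis carries no information about $f$. Your argument through Theorem \ref{main} stands, but the corollary is really a statement about Riemannian warped products.
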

	\begin{proof}
		The result follows from the above theorem, as in this case Schur's lemma for Ricci curvature 
		implies that Randers manifold is actually of constant Ricci curvature.
		(See explanation after Definition \ref{Einstein}).
	\end{proof}

\bigskip

In the next theorem we consider compact Riemannian manifold 
as a base of the warped product and $\lambda$ as non-positive scalar function. In view of Remark \ref{dim},  it suffices to consider the  case when  $n_1, n_2 \geq  2$ and $n \geq 4$. 

\vspace{0.15in}
 
\begin{theorem}\label{mainf}
	Let the base $(M_1^{n_1},g_1)$ be a compact, connected Riemannian manifold, $f$ being a  warping function on the base manifold and $(M_2^{n_2},F_2)$ being a weakly Berwald Finsler manifold. Then for an Einstein warped product $M=M_1\times_f M_2$ with non-positive Ricci scalar function $\lambda$, the
 warping function $f$ is constant, and warped product  
 is simply a Riemannian product of two Finslerian manifolds;
 in fact is also a weakly Berwald manifold.
	\end{theorem}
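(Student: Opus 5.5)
A maximum principle on the compact base combined with the Corollary \ref{f20} identities should suffice; compactness removes the need for Omori--Yau and for the hypotheses (i)--(iii) of Theorem \ref{main}. By Remark \ref{dim} and Proposition \ref{prop5.1} we may assume $n_1,n_2\geq 2$. We first note that $\lambda$ is a function of $x_1$ alone. Indeed, \eqref{f8} gives $\lambda g_{1\,ij} = (\operatorname{Ric}^{M_1})_{ij} - \frac{n_2}{f}H^f_{ij}$, and the right-hand side depends only on $x_1$. Moreover, $\mu$ depends only on the fiber point $x_2$, while by \eqref{f10} it equals an expression in $x_1$. Hence $\mu$ is a constant.

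Next, evaluate \eqref{f10},
\[
\mu=\lambda f^2 - f\Delta f+(n_2-1)\|\nabla f\|^2 ,
\]
at a maximum point $p$ and a minimum point $q$ of $f$ on the compact $M_1$. With the paper's sign convention $\Delta f=-\operatorname{tr}H^f$, we have $\Delta f(p)\geq 0$, $\Delta f(q)\leq 0$, and $\nabla f(p)=\nabla f(q)=0$. This yields
\[
\mu \leq \lambda(p)(\max f)^2 \quad\text{and}\quad \mu\geq \lambda(q)(\min f)^2 .
\]
Since $\lambda$ may vary, comparing these two inequalities does not directly finish the argument, as it did in Theorem \ref{main}.

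The argument therefore splits into two cases.

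\emph{Case $\mu\ge 0$.} Then $\lambda(p)\geq 0$, so $\lambda(p)=0$ and $\mu=0$. We then study $u=f^{n_2}$ or a similar power. Multiplying \eqref{f10} suitably gives
\[
-f\Delta f+(n_2-1)\|\nabla f\|^2=\mu-\lambda f^2\geq 0,
\]
i.e.\ $\operatorname{div}(f\nabla f)=\|\nabla f\|^2-f\Delta f \ge -(n_2-2)\|\nabla f\|^2$. A cleaner route is to choose an exponent $k$ with
\[
\Delta(f^{k})\cdot(\text{positive})=(\lambda f^2-\mu)\cdot(\text{positive})\leq 0 .
\]
Using $\Delta f^k = kf^{k-1}\Delta f - k(k-1)f^{k-2}\|\nabla f\|^2$, we take $k=n_2$. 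Then
\[
f^{2-n_2}\Delta(f^{n_2})/n_2 = f\Delta f-(n_2-1)\|\nabla f\|^2=\lambda f^2-\mu .
\]
Thus
\[
\Delta(f^{n_2}) = n_2 f^{n_2-2}(\lambda f^2-\mu).
\]
If $\mu\geq 0$ and $\lambda\le 0$, the right-hand side is $\leq 0$. Hence $f^{n_2}$ is subharmonic in the geometer's convention. Integrating over the compact $M_1$ (or applying the strong maximum principle) shows that it is constant, so $f$ is constant.

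\emph{Case $\mu<0$.} This is the main obstacle. I would use the same identity
\[
\Delta(f^{n_2}) = n_2 f^{n_2-2}(\lambda f^2-\mu)
\]
and integrate to get
\[
0=\int_{M_1} f^{n_2-2}(\lambda f^2-\mu),
\]
which forces $\lambda f^2-\mu$ to change sign. To derive a contradiction or constancy, I would exploit the trace relation \eqref{1.4} together with the Kim--Kim divergence identity used in Theorem \ref{exist}. That identity, \eqref{valueofmu} with $\mu$ constant, reduces the differentiated equation to
\[
2f^2\operatorname{div}(\lambda g_1)=(2n_1+n_2)f^2\,d\lambda ,
\]
so $(2n_1+n_2-2)\,d\lambda=0$. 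Hence $\lambda$ is constant. With $\lambda$ a nonpositive constant, the inequalities at $p$ and $q$ combine as in Theorem \ref{main}:
\[
\lambda(\max f)^2\ge\mu\ge \lambda(\min f)^2 .
\]
If $\lambda<0$, this forces $\max f=\min f$. If $\lambda=0$, then $\mu$ would have to be both $\le 0$ and $\ge 0$, so $\mu=0$, which falls under the first case. In every case $f$ is constant, and Corollary \ref{weakly} then gives that $M$ is a Riemannian product which is weakly Berwald. The delicate step is verifying that the divergence computation indeed makes $\lambda$ constant for $n_1\ge 2$. If it does, the case split above becomes largely redundant, since constancy of $\lambda$ is then available directly.
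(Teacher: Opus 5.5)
Your argument is correct, and it is more careful than the paper's, although its final step coincides with the paper's. The paper goes straight to the extremum comparison. At a maximum point $x_0$ and a minimum point $y_0$ of $f$ it uses \eqref{f10}, then writes $\lambda(\max f)^2-\mu\le\lambda(\min f)^2-\mu$ and splits into the cases $\lambda<0$ and $\lambda=0$. This silently treats $\lambda(x_0)$ and $\lambda(y_0)$ as the same number, which is unjustified for a Ricci scalar \emph{function}. You spotted exactly this gap and supplied the missing ingredient:
\begin{itemize}
\item \eqref{f8} forces $\lambda=\lambda(x_1)$;
\item \eqref{f10} then equates a function of $x_2$ with a function of $x_1$, so $\mu$ is constant;
\item the contracted Bianchi identity on the Riemannian base upgrades this to $d\lambda=0$.
\end{itemize}
After that, your inequalities $\lambda(\max f)^2\ge\mu\ge\lambda(\min f)^2$ finish the proof just as in the paper, and Corollary \ref{weakly} gives the weakly Berwald conclusion.

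The step you flag as delicate does go through, but not with the coefficient you copied from the proof of Theorem \ref{exist}. Recompute using $\operatorname{div}(\lambda g_1)=d\lambda$, $\operatorname{div}H^f=\operatorname{Ric}^{M_1}(\nabla f,\cdot)-d(\Delta f)$ and $H^f(\nabla f,\cdot)=\tfrac12\,d\|\nabla f\|^2$. With $\mu$ denoting the right-hand side of \eqref{f10}, one gets
\[
n_2\,d\mu=(n_1+n_2-2)\,f^2\,d\lambda .
\]
For $n_1=1$ this can be checked by hand using $\lambda=-n_2f''/f$. So the factor is $n_1+n_2-2$, not $2n_1+n_2-2$. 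It is still positive because $n_1+n_2\ge 3$, so your conclusion stands.

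Consequently the case split is redundant. Your identity $\Delta(f^{n_2})=n_2f^{n_2-2}(\lambda f^2-\mu)$ is a neat alternative to the paper's treatment of $\lambda=0$, and it handles $\mu\ge 0$ even without knowing that $\lambda$ is constant. For $\mu<0$, however, it is the constancy of $\lambda$ that does the work. The clean write-up is therefore: separation of variables, then the Bianchi identity, then the extremum comparison.
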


\begin{proof}
    As the  base manifold is compact, warping function $f$ attains maximum and minimum. Hence,
	there exists $x_0$ and $y_0$ such that 
	\begin{equation}
		\displaystyle f(x_0)=\max f, \;\displaystyle\nabla f(x_0)=0, \;\displaystyle\Delta f(x_0)\ge 0,
	\end{equation}
	and
	\begin{equation}
		\displaystyle f(y_0)=\min f, \;\displaystyle\nabla f(y_0)=0, \;\displaystyle\Delta f(y_0)\le 0.
	\end{equation}
Now,
   \begin{equation}
   	\mu(y)= \{\lambda(x_0,y)f^2(x_0)-f(x_0)\Delta f(x_0)+(n_2 -1)||\nabla f||^2(x_0)\}.
   \end{equation}
	As $\lambda(x_0,y) \le 0$, we get
	\begin{equation}\label{eq1.141}
		\lambda(\max f)^2-\mu \le \lambda (\min f)^2-\mu.
	\end{equation}
	But, 
	$$ f(x_0) \Delta f(x_0)=\max f \; \Delta f(x_0)\ge 0.$$
	This yields by expression of $\mu$  (\ref{f10}),
	\begin{equation}\label{eq1.111}
		\lambda(\max f)^2-\mu \ge 0.
	\end{equation}
	Similarly, 
 \begin{equation}
	\mu(y)= \lambda(y_0,y)f^2(y_0)-f(y_0)\Delta f(y_0)+(n_2 -1)||\nabla f||^2(y_0).
\end{equation}
But, 
$$ f(y_0) \Delta f(y_0)=\max f  \Delta f(y_0)\le 0.$$
	Consequently,
	\begin{equation}\label{eq1.211}
		\lambda(\min f)^2-\mu \le 0.
	\end{equation} 
	Hence,
	\begin{equation}\label{eq1.021}
		\lambda(\max f)^2-\mu \ge \lambda(\min f)^2-\mu.
	\end{equation}
	Employing \eqref{eq1.141} and \eqref{eq1.021} we obtain,
	\begin{equation}\label{lam}
		\lambda(\max f)^2-\mu = \lambda(\min f)^2-\mu.
	\end{equation}
	
	Now we consider the following cases:
	
	(i) $\lambda <0$ :
	
	By (\ref{lam}), $\max f = \min f$ which implies that $f$ constant.
	
	(ii) $\lambda=0$ :
	
	By \eqref{eq1.111} and \eqref{eq1.211}, $\mu=0$.
	Thus,
	\begin{equation}
		\lambda f^2- f\Delta f + (n_2-1)\|\Delta f\|^2= \mu,
	\end{equation}
	implies that	
	\begin{equation}\label{kap}
		f\Delta f = (n_2-1)\|\nabla f\|^2 \ge 0.
	\end{equation}

	Hence, warping function $f$ is constant.
Thus, the conclusion follows in view of Corollary \ref{weakly}.
\end{proof}

\begin{corollary}
Let $(M_1^{n_1},g_1)$ be a compact Riemannian manifold
 and  $f$ be a  warping function on the base manifold and $(M_2^{n_2},F_2)$ be a weakly Berwald Finsler Manifold. Then 
 for the warped product $M=M_1\times_f M_2$ to be a legitimate Einstein Finsler warped product 
 manifold, Ricci must be a positive scalar function 
\end{corollary}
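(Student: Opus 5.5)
The plan is to argue by contraposition: this corollary is essentially Theorem \ref{mainf} read in the opposite direction. By a \emph{legitimate} Einstein Finsler warped product we mean one that is genuinely warped, that is, one whose warping function $f$ is non-constant; otherwise $M$ is merely the Riemannian product of $M_1$ and $M_2$.

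Suppose $M=M_1\times_f M_2$ is Einstein with $\operatorname{Ric}_{ab}=\lambda g_{ab}$, and suppose $\lambda$ is not positive. The first step is to interpret ``not positive'' correctly. The proof of Theorem \ref{mainf} uses only the sign of $\lambda$ at the extremal points $x_0$ and $y_0$ of $f$, together with the equality in \eqref{lam}. So I would first treat the case $\lambda\le 0$ everywhere. Theorem \ref{mainf} then applies directly and forces $f$ to be constant, which contradicts legitimacy.

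When $n_1=1$ or $n_2=1$, Proposition \ref{prop5.1} gives the same conclusion. The dimension restriction of Remark \ref{dim} therefore costs nothing.

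The remaining and most delicate point is what ``positive scalar function'' means when $\lambda$ changes sign. I would handle it by working directly with \eqref{f10}. At the maximum point $x_0$ of $f$ we have $\nabla f(x_0)=0$ and $\Delta f(x_0)\ge 0$, so
\[
\mu=\lambda(x_0)f(x_0)^2-f(x_0)\Delta f(x_0)\le \lambda(x_0)(\max f)^2 .
\]
At the minimum point $y_0$ we have $\nabla f(y_0)=0$ and $\Delta f(y_0)\le 0$, so
\[
\mu\ge \lambda(y_0)(\min f)^2 .
\]
Here $\mu$ is constant on the base, since it is the Einstein function of the fibre given by \eqref{f9}. If $\lambda\le0$ at both points with $f$ non-constant, the argument in the proof of Theorem \ref{mainf} applies verbatim to give a contradiction.

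The statement is really the negation of Theorem \ref{mainf}. So, following the paper's usage, I would state the conclusion as follows: $\lambda$ must be positive. Equivalently, whenever $\lambda$ is a non-positive scalar function, the warped product is trivial. This completes the argument via Theorem \ref{mainf} and Proposition \ref{prop5.1}. I expect the main obstacle to be only this interpretive step, namely ensuring that the sign hypotheses of Theorem \ref{mainf} match the negation of ``positive''.
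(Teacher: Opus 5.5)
You follow the paper's route. The corollary is stated there without proof, as the contrapositive of Theorem \ref{mainf}, with Proposition \ref{prop5.1} for $n_1=1$ or $n_2=1$. However, the interpretive step you flag is a genuine gap, and your paragraph on it does not close it. The negation of ``$\lambda$ is positive'' is ``$\lambda\le 0$ at some point'', so your closing ``equivalently'' is not an equivalence. At the extremal points you only obtain $\lambda(x_0)(\max f)^2\ge\mu\ge\lambda(y_0)(\min f)^2$. This is no contradiction when $\lambda(x_0)\neq\lambda(y_0)$, even if both are $\le 0$, so the argument does not apply ``verbatim''. It says nothing at all when $\lambda(x_0)>0\ge\lambda(y_0)$. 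Likewise, the proof of Theorem \ref{mainf} does not use only the signs of $\lambda$ at $x_0,y_0$: \eqref{eq1.141} and \eqref{lam} compare $(\max f)^2$ and $(\min f)^2$ against one and the same value of $\lambda$.

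The missing idea is that $\lambda$ is forced to be constant when $n_1+n_2\ge 3$. By \eqref{f8}, $\lambda$ depends only on $x_1$. So \eqref{f10} equates a function of $x_2$ with a function of $x_1$, and $\mu$ is constant. Take the divergence of \eqref{f8}, using $\operatorname{div}\operatorname{Ric}^{M_1}=\tfrac12\, d\operatorname{Scal}_{M_1}$, $\operatorname{Scal}_{M_1}=n_1\lambda-n_2\Delta f/f$, $\operatorname{div}H^f=\operatorname{Ric}^{M_1}(\nabla f,\cdot)-d(\Delta f)$ and $H^f(\nabla f,\cdot)=\tfrac12\, d\|\nabla f\|^2$. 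One gets
\[
n_2\, d\bigl(\lambda f^2-f\Delta f+(n_2-1)\|\nabla f\|^2\bigr)=(n_1+n_2-2)\,f^2\,d\lambda .
\]
This is the computation attempted in the proof of Theorem \ref{exist}, whose final coefficients are off. The left-hand side is $n_2\,d\mu=0$, so $\lambda$ is constant on the connected base. Only then does ``not positive'' mean ``$\lambda\le 0$''. Theorem \ref{mainf} (whose proof needs this constancy anyway) together with Proposition \ref{prop5.1} then finishes the argument.

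For $n_1=n_2=1$ no such repair exists, and the literal statement is false. Take $S^1\times_f S^1$ with non-constant $f$: it is Einstein with $\lambda=-f''/f$, and $\int_{S^1}\lambda f\,dt=-\int_{S^1}f''\,dt=0$ forces $\lambda$ to change sign. In that case only your contrapositive reading can be proved.
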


\vspace{0.15in}
 

 \begin{remark}
     Note that  proof of Theorem \ref{mainf} is entirely different from Kim and Kim's original proof. In particular, we provide an entirely different  proof of Kim and Kim's result,
     Theorem \ref{kk}.
 \end{remark}

\section{Einstein Finsler warped product manifold with Ricci a positive scalar function}\label{sec6}
In this section, we investigate the triviality of Einstein warped product manifold with Ricci scalar non-positive, and base a compact manifold. We also explore obstructions to  trivaliy in terms of the scalar curvatures of base, fiber and warped product. In  view of Proposition \ref{prop5.1}, the subsequent section we will focus on scenarios where $n_1,n_2$ are both are at least $2$, while $n$ exceeds $4$.

\vspace{0.15in}

\begin{theorem}
	Let $M=M_1^{n_1}\times_{f} M_2^{n_2}$ be an Einstein warped product of non-negative Ricci scalar  with the base a Riemannian manifold and the fiber a weakly Berwald Finsler manifold of  non-positive  scalar function at one point. Then the warped product $M=M_1\times_f M_2$ is weakly Berwald  Riemannian product of two 
 Finslerian manifolds, if any of the following holds:
	\begin{itemize}
		\item[\emph{(i)}] Warping function $f$ attains its maximum in base manifold.
		\item[\emph{(ii)}] Base manifold is compact.
	\end{itemize}
\end{theorem}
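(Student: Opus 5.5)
The plan is to work entirely with the Einstein relation \eqref{f10} of Corollary \ref{f20}, which ties the fiber Ricci scalar $\mu$ to the warping function:
\begin{equation*}
\mu=\lambda f^2 - f\Delta f+(n_2-1)\|\nabla f\|^2 .
\end{equation*}
Here $\mu$ depends only on the fiber point and $f$ only on the base point. Since $(M_2,F_2)$ is weakly Berwald and Einstein with $\operatorname{Ric}^{M_2}_{\alpha\beta}=\mu g_{\alpha\beta}$, the hypothesis ``non-positive scalar function at one point'' gives a point $q\in M_2$ with $\mu(q)\le 0$. Freezing the fiber variable at $q$, the identity above is an equation on $M_1$ whose left-hand side is a constant $\mu(q)\le 0$.

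\textbf{Case (i).} Suppose $f$ attains its maximum at $x_0\in M_1$. Then $\nabla f(x_0)=0$, and with the paper's sign convention $\Delta f=-\operatorname{tr}H^f$ we have $\Delta f(x_0)\ge 0$. Evaluating \eqref{f10} at $(x_0,q)$ gives
\begin{equation*}
0\ge \mu(q)=\lambda f(x_0)^2 - f(x_0)\Delta f(x_0).
\end{equation*}
Since $\lambda\ge 0$ and $f>0$, this yields $f(x_0)\Delta f(x_0)\ge \lambda f(x_0)^2\ge 0$, which is consistent but not yet a contradiction. The real information comes from the whole base. Rewrite \eqref{f10} at $q$ as
\begin{equation*}
f\Delta f-(n_2-1)\|\nabla f\|^2=\lambda f^2-\mu(q)\ge 0 \quad\text{on } M_1 .
\end{equation*}
In terms of the ordinary (analyst's) Laplacian $\Delta_0=-\Delta$, this reads $f\Delta_0 f+(n_2-1)\|\nabla f\|^2\le 0$. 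I would then substitute $u=f^{n_2}$. A short computation gives $\Delta_0 u = n_2 f^{n_2-2}\big(f\Delta_0 f+(n_2-1)\|\nabla f\|^2\big)\le 0$, so $u$ is superharmonic.

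This is the wrong direction for applying the maximum principle at a maximum point. So instead I would use the minimum form of the strong maximum principle: a superharmonic function attaining an interior minimum is constant. For case (i) I therefore expect to need the maximum principle in the other direction. Working directly with $\Delta_0 u\le 0$ and $u$ maximal at $x_0$ does not force constancy, so at this point I must check the sign conventions carefully; this is the main obstacle. If the sign indeed yields $\Delta_0 u\ge 0$ (subharmonic), then the interior maximum of $u$ at $x_0$ together with Hopf's strong maximum principle on the connected base gives $u$, and hence $f$, constant. If it does not, I would fall back on the pointwise inequality at $x_0$: $\mu(q)\le0$ and $\lambda\ge0$ force $\Delta_0 f(x_0)\ge \lambda f(x_0)\ge 0$, contradicting maximality unless equalities hold. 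I would then propagate this along the open, closed set where $f=\max f$.

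\textbf{Case (ii).} A compact base is a special case of (i), since $f$ attains its maximum on $M_1$. Alternatively, I would integrate $\Delta_0 u$ over $M_1$: by Stokes' theorem the integral is zero, and combined with the sign of $\Delta_0 u$ this forces $\Delta_0 u\equiv0$. Then $\nabla f\equiv 0$ by the identity above, so $f$ is constant.

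In both cases, once $f$ is constant, Corollary \ref{weakly} shows that $M$ is a Riemannian product of the two Finsler manifolds and is weakly Berwald.
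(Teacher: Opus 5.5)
\textbf{Gap in case (i), which cannot be closed.} Your computation $\Delta_0 u=n_2 f^{n_2-2}(\mu(q)-\lambda f^2)\le 0$ is correct, and it settles the matter. The function $u=f^{n_2}$ is superharmonic, and so is $f$ itself, since $f\Delta_0 f\le\mu(q)-\lambda f^2\le 0$. An interior maximum therefore gives no information.

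Neither of your branches for case (i) gets around this. The first branch hopes for $\Delta_0 u\ge 0$, which your own calculation rules out. The fallback has a sign slip. At a maximum point $x_0$, \eqref{f10} reads $\mu(q)=\lambda f(x_0)^2+f(x_0)\Delta_0 f(x_0)\le 0$, i.e.\ $\Delta_0 f(x_0)\le-\lambda f(x_0)\le 0$, not $\Delta_0 f(x_0)\ge\lambda f(x_0)$. That is exactly what a maximum permits. Showing that $\{f=\max f\}$ is open would need subharmonicity near that set, which you do not have.

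No repair is possible, because (i) is false. $S^2$ minus its poles is $(0,\pi)\times_{\sin t}S^1$. It is Einstein with $\lambda=1$, its fiber $S^1$ is Riemannian (hence weakly Berwald) with $\mu=0$, and $f=\sin t$ attains its maximum at $t=\pi/2$ without being constant. Indeed \eqref{f8} reads $0=1+\frac{1}{\sin t}(-\sin t)$ and \eqref{f10} reads $0=\sin^2t-\sin^2t$.

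The restriction $n_1,n_2\ge 2$ does not save (i) either. On a small disk with metric $dr^2+h(r)^2d\theta^2$, take $f'=c\,h$ with $c<0$ and $h''=-h(\lambda+n_2c\,h'/f)$, $h(0)=0$, $h'(0)=1$, $f(0)>0$. This is a regular ODE system and it satisfies \eqref{f8}. The function $f$ has a strict maximum at the center. The constant $\mu$ equals $f(0)(2c+\lambda f(0))$, which is negative for $\lambda=1$, $f(0)=1$, $c=-1$.

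\textbf{Comparison with the paper, and what survives.} The paper's proof fails at the same place. From \eqref{f10} it derives $\Delta f\ge\lambda f$, calls $f$ subharmonic, and applies the maximum principle at a maximum. With its own convention $\Delta f=-\operatorname{tr}H^f$, however, this inequality says $f$ is superharmonic. It also writes $\lambda f>0$ although only $\lambda\ge 0$ is assumed. So your worry about signs was justified, and only part (ii), for a closed base, is actually provable.

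For (ii), drop the reduction ``compact is a special case of (i)'' and keep your integral argument, with the last step corrected:
\begin{itemize}
\item $\int_{M_1}\Delta_0 u=0$ together with $\Delta_0 u\le 0$ gives $\Delta_0 u\equiv 0$, hence $\lambda f^2\equiv\mu(q)$.
\item Since $\lambda\ge 0\ge\mu(q)$, this forces $\lambda(\cdot,q)\equiv 0=\mu(q)$.
\item Then $u$ is harmonic on a closed connected manifold, so $\int_{M_1}\|\nabla u\|^2=-\int_{M_1}u\,\Delta_0 u=0$, and $f$ is constant.
\end{itemize}
The identity $f\Delta_0 f=-(n_2-1)\|\nabla f\|^2$ alone does not give $\nabla f\equiv 0$ as you claim. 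Equivalently, you can apply the strong minimum principle to the superharmonic $u$ at its minimum.

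The absence of boundary is essential: $[\pi/4,3\pi/4]\times_{\sin t}S^1$ is a compact counterexample with boundary.
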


\begin{proof}
	Suppose $\Tilde{x_2} \in M_2$ is a point where the scalar curvature is non-positive, that is $\mu(\Tilde{x_2})<0$. Then by \eqref{f10}, we have  \newline
	\begin{equation*}
		-f(x_1)\Delta f(x_1) +(n_2-1)\|\nabla f\|^2(x_1)+\lambda (x_1,\Tilde{x_2}) f^2(x_1)=\mu(\Tilde{x_2}).
	\end{equation*}
	This yiels that,
	\begin{equation*}
		-f(x_1)\Delta f(x_1) +\lambda (x_1,\Tilde{x_2}) f^2(x_1)=\mu(\Tilde{x_2})-(n_2-1)\|\nabla f\|^2(x_1)\leq 0.
	\end{equation*}
	Hence, we affirm that
	\begin{equation*}
		f(x_1)\Delta f(x_1) \geq \lambda(x_1,\Tilde{x_2})f^2(x_1). 
	\end{equation*}		
As $f(x)$ is strictly positive function on base manifold and $\lambda$ is positive, we confirm
	\begin{equation*}
		\Delta f(x_1) \geq \lambda f(x_1) > 0.
	\end{equation*}
	This implies that the warping function 
$f$ is a subharmonic function on the base manifold. Using the maximum principle, $f$ is constant, if it attains its maximum or if the base manifold is compact.
\end{proof}

\bigskip
Under the hypothesis of the above theorem, the Ricci scalar function of the fiber cannot be negative.
We conclude:

\vspace{0.15in}

\begin{corollary}\label{rn}
Let $M=M_1^{n_1}\times_{f} M_2^{n_2}$ be an Einstein Finslerian warped product manifold of positive Ricci scalar function with base, a compact Riemannian manifold, and fiber a weakly Berwald Finsler manifold. Then in order that the warped product $M$ is a legitimate  Einstein Finsler warped product manifold, Ricci scalar function of the fiber must be positive.
\end{corollary}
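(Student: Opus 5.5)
The corollary is the contrapositive of the preceding theorem, case (ii), combined with a sign analysis of $\mu$. We argue by contradiction. Suppose $M$ is a legitimate (non-trivial) Einstein Finsler warped product with positive Ricci scalar $\lambda$ and compact base $M_1$, and suppose the Ricci scalar $\mu$ of the fiber is not positive. By Corollary \ref{f20}, $(M_2,F_2)$ is Einstein with $\operatorname{Ric}^{M_2}_{\alpha\beta}=\mu g_{\alpha\beta}$. Relation \eqref{f10} shows that $\mu$ equals an expression whose $x_2$-dependence enters only through $\lambda$. Failure of positivity therefore produces a point $\tilde{x}_2\in M_2$ with $\mu(\tilde{x}_2)\le 0$, so the fiber has non-positive scalar function at one point. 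This is exactly the hypothesis of the preceding theorem.

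Since the base is compact, the preceding theorem (case (ii)) applies. We make the mechanism explicit so the sign conventions are checked. At $\tilde{x}_2$, equation \eqref{f10} gives
\[
f\Delta f = \lambda f^2 + (n_2-1)\|\nabla f\|^2-\mu(\tilde{x}_2)\ \ge\ \lambda f^2>0 .
\]
Recall the paper's convention $\Delta f=-\operatorname{div}\nabla f$. Evaluate at a maximum point $x_0$ of $f$, which exists by compactness: there $\nabla f(x_0)=0$ and $\Delta f(x_0)\ge 0$, consistent with the inequality. The decisive step is to evaluate instead at a minimum point $y_0$, where $\Delta f(y_0)\le 0$. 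This gives $0\ge f(y_0)\Delta f(y_0)\ge \lambda f(y_0)^2>0$, a contradiction. Equivalently, integrate over the compact $M_1$: $\int_{M_1}\Delta f=0$, whereas the integrand is strictly positive.

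Either route contradicts the assumption, so $\mu>0$ everywhere on $M_2$. In particular, by \eqref{1.5}, $\operatorname{Scal}_{M_2}=n_2\mu>0$. The same argument also shows that if $\mu\le 0$ somewhere, then the warped product is forced to be trivial (and is in fact inconsistent when $\lambda>0$). By Corollary \ref{weakly}, it would then be a weakly Berwald Riemannian product.

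The main obstacle is bookkeeping the sign convention of $\Delta$ and the fact that $\lambda$ may depend on $(x_1,x_2)$. The argument above only needs $\lambda(x_1,\tilde{x}_2)>0$ pointwise, together with the evaluation at a minimum of $f$ (or integration over $M_1$). I would present the minimum-point argument, since it avoids any regularity issue with $\lambda$.
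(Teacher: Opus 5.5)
Your proposal is correct and follows essentially the paper's route: the corollary is the contrapositive of the preceding theorem in the compact-base case. Both arguments evaluate \eqref{f10} at a point $\tilde{x}_2$ with $\mu(\tilde{x}_2)\le 0$ to get $f\Delta f\ge \lambda f^2>0$ on all of $M_1$, and your endgame (a minimum of $f$, where $\Delta f\le 0$ under the convention $\Delta f=-\operatorname{div}\nabla f$, or integration over $M_1$) is slightly sharper than the paper's appeal to ``$f$ is subharmonic, hence constant by the maximum principle'': it shows that $\lambda>0$ with $\mu(\tilde{x}_2)\le 0$ is outright impossible rather than merely forcing triviality, and it uses the extremum that is correct for the paper's sign convention.
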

 
\bigskip
 
 Dan Dumitru delves into the specific conditions related to the scalar curvature of the base and fiber of Riemannian warped product manifolds. These conditions lead to the elegant revelation that the resulting manifold is, in fact, a product of Riemannian manifolds, as demonstrated in Theorem $2.5$ of \cite{dam}. We extend this result to the  Finslerian settings.
 In view of the above corollary, it suffices to explore obstructions to the triviality of Finslerian warped product manifolds when $\operatorname{Scal}_M$ and $\operatorname{Scal}_{M_2}$ are strictly positive.

\bigskip

\begin{theorem}\label{thm4.2}
	Let $M=M_1^{n_1}\times_{f} M_2^{n_2}$ be an Einstein warped product Finsler manifold with 
 positive scalar curvature. 
 Let the base be a Riemannian manifold and fiber be  a weakly Berwald Finsler manifold with 
positive scalar curvature.
Suppose that 
  any of  the following conditions holds, then the 
  warping function is either subharmonic or superharmonic. Consequently, if the base manifold is compact 
  or if either $f$ attains its maximum or minimum, then
  it is a constant function. In this case,
   the warped product $M=M_1\times_f M_2$ is a weakly Berwald Finsler manifold, which is  simply a Riemannian product of two Finslerian manifolds.
	\begin{itemize}
		\item[\emph{(a)}] There exist a point $\Tilde{x_2}\in M_2$ such that 
  $\|\nabla f(x_1)\|\geq \sqrt{\frac{\operatorname{Scal}_{M_2}(\Tilde{x_2})}{n_2(n_2-1)}}$ for every $x_1$,
		
		\item[\emph{(b)}] $\operatorname{Scal}_M\leq \operatorname{Scal}_{M_1}$,
		
		\item[\emph{(c)}] $\operatorname{Scal}_M-\operatorname{Scal}_{M_2}\geq \operatorname{Scal}_{M_1}$ and $\frac{\operatorname{Scal}_{M_2}(x_2)}{\operatorname{Scal}_{M}(x_1,x_2)}\geq\frac{n_2}{(n_1+n_2)}$, for all $(x_1,x_2)\in M_1\times M_2$,
		
		\item[\emph{(d)}] $\operatorname{Scal}_M-\dfrac{\operatorname{Scal}_{M_2}}{f^2}\geq \operatorname{Scal}_{M_1}$, 
		
		\item[\emph{(e)}] $\operatorname{Scal}_{M_1}\leq 0$,
		
		\item[\emph{(f)}] $\operatorname{Scal}_{M_1}\geq\dfrac{n_1\operatorname{Scal}_{M_2}(x_2)}{n_2f^2(x_1)}$. 
	\end{itemize} 
\end{theorem}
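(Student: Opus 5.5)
The plan is to reduce each of (a)--(f) to a one-sided sign condition on $\Delta f$, using only the three scalar identities \eqref{1.4}, \eqref{1.5}, \eqref{1.6}. The identity \eqref{f10}, $\mu=\lambda f^2-f\Delta f+(n_2-1)\|\nabla f\|^2$, is also needed, with $\operatorname{Scal}_{M_2}=n_2\mu$ and, for the Einstein metric, $\operatorname{Scal}_M=(n_1+n_2)\lambda$. Once $\Delta f\ge 0$ or $\Delta f\le 0$ holds everywhere, $f$ is subharmonic or superharmonic with the paper's sign convention $\Delta=-\operatorname{div}\nabla$. The maximum principle then gives constancy when the base is compact or $f$ attains its extremum, and Corollary \ref{weakly} finishes the argument exactly as in the previous theorem. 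So the work is a case-by-case algebraic verification.

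The cases go as follows.

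For (a), rewrite \eqref{f10} as $f\Delta f=\lambda f^2+(n_2-1)\|\nabla f\|^2-\mu$ and evaluate at $\tilde x_2$. The hypothesis gives $(n_2-1)\|\nabla f\|^2\ge \operatorname{Scal}_{M_2}(\tilde x_2)/n_2=\mu(\tilde x_2)$. Since $\lambda>0$, this yields $\Delta f\ge \lambda f>0$.

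For (b), note that \eqref{1.4} says $\operatorname{Scal}_{M_1}=n_1\lambda-n_2\Delta f/f$. I need to compare this with $\operatorname{Scal}_M$. If $\operatorname{Scal}_M$ here means $n_1\lambda$, as \eqref{1.6} suggests when \eqref{1.4}, \eqref{1.5} and \eqref{f10} are combined, then (b) gives directly $n_2\Delta f/f\le 0$. I will fix the normalisation by checking \eqref{1.6} against \eqref{1.4}, \eqref{1.5} and \eqref{f10}: substituting gives $\operatorname{Scal}_M=n_1\lambda+n_2\lambda=(n_1+n_2)\lambda$. Then (b) gives $n_2\lambda\le -n_2\Delta f/f$, so $\Delta f\le -\lambda f<0$, which is superharmonic.

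For (d), substitute \eqref{1.6}:
\[
\operatorname{Scal}_M-\frac{\operatorname{Scal}_{M_2}}{f^2}-\operatorname{Scal}_{M_1}=\frac{2\operatorname{Scal}_{M_2}}{f^2}\cdot(-1)+\dots
\]
More cleanly, I subtract to isolate $-n_2(n_2-1)\|\nabla f\|^2/f^2+2n_2\Delta f/f-2\operatorname{Scal}_{M_2}/f^2\ge0$. This gives $\Delta f\ge \mu/f+\tfrac{n_2-1}{2}\|\nabla f\|^2/f>0$, using $\mu>0$.

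For (c), I combine the two inequalities in the same way. The ratio condition $\operatorname{Scal}_{M_2}\ge n_2\lambda$ means $\mu\ge\lambda$. Together with \eqref{1.4}, this should produce $\Delta f\ge 0$ after using $f$-weights; I expect to need care here.

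For (e), use \eqref{1.4}: $\operatorname{Scal}_{M_1}\le0$ gives $n_2\Delta f/f\ge n_1\lambda>0$.

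For (f), combine \eqref{1.4} with $\operatorname{Scal}_{M_2}=n_2\mu$ and \eqref{f10}. The condition becomes $n_1\lambda-n_2\Delta f/f\ge n_1(\lambda-\Delta f/f+(n_2-1)\|\nabla f\|^2/f^2)$, that is, $(n_1-n_2)\Delta f/f\ge n_1(n_2-1)\|\nabla f\|^2/f^2\ge0$. This gives a definite sign of $\Delta f$ when $n_1\ne n_2$, and forces $\nabla f=0$ when $n_1=n_2$.

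The main obstacle is (c), and to a lesser extent (f), because the hypotheses there mix the $f^{-2}$-weighted fiber term with unweighted ones. There the sign of $\Delta f$ is not immediate without a bound relating $f$ to $1$. For (c), I will first try writing $\operatorname{Scal}_M-\operatorname{Scal}_{M_2}-\operatorname{Scal}_{M_1}$ via \eqref{1.4} as $n_2\lambda-n_2\mu+n_2\Delta f/f\ge0$. This yields $\Delta f\ge(\mu-\lambda)f\ge0$ by the ratio condition, which already suffices. If the intended reading differs, I will fall back on \eqref{1.6} and absorb the $f^{-2}$ factor using the ratio hypothesis.
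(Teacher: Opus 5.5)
Your approach is the paper's: reduce each of (a)--(f) to a one-sided sign of $\Delta f$ using \eqref{1.4}, \eqref{1.5}, \eqref{1.6}, \eqref{f10} and $\operatorname{Scal}_M=(n_1+n_2)\lambda$. Your computations for (a), (b), (c), (e) and (f) agree with the paper's. In (c) there is in fact no $f^{-2}$ difficulty, and your identity $\operatorname{Scal}_M-\operatorname{Scal}_{M_2}-\operatorname{Scal}_{M_1}=n_2(\lambda-\mu+\Delta f/f)$ is just a rearrangement of what the paper does.

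The one step that fails as written is (d). When you solved \eqref{1.6} for $\operatorname{Scal}_{M_1}$, you flipped the sign of the $\operatorname{Scal}_{M_2}/f^2$ term. In fact the two fiber terms cancel, and \eqref{1.6} gives
\[
\operatorname{Scal}_M-\frac{\operatorname{Scal}_{M_2}}{f^2}-\operatorname{Scal}_{M_1}=2n_2\frac{\Delta f}{f}-n_2(n_2-1)\frac{\|\nabla f\|^2}{f^2}.
\]
So hypothesis (d) yields only $\Delta f\ge \frac{n_2-1}{2}\,\|\nabla f\|^2/f\ge 0$. It does not yield the strict bound $\Delta f\ge \mu/f+\dots>0$ that you claimed from $\mu>0$. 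The weaker inequality is still enough for the sign condition the theorem needs. On a compact base, integrating gives $\int_{M_1}\Delta f=0$, so $\Delta f\equiv 0$, and then the displayed inequality forces $\nabla f\equiv 0$. The conclusion for (d) therefore survives once the algebra is corrected.
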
	

\begin{proof}
	{(a)} For $(x_1, x_2) \in M$ availing \eqref{f10},
	\begin{equation*}
		-f(x_1)\Delta f(x_1) + (n_2-1)\|\nabla f(x_1)\|^2+ \lambda(x_1,x_2)f^2(x_1)=\mu (x_2). 
	\end{equation*}
Then for $x_2=\Tilde{x_2}$, the above equation implies,
     \begin{equation*}
     	-f(x_1)\Delta f(x_1) + \lambda(x_1,\Tilde{x_2})f^2(x_1)=\mu (\Tilde{x_2})-(n_2-1)\|\nabla f(x_1)\|^2\leq 0,
     \end{equation*} 
 by using the hypothesis (a) which can be written as $\|\nabla f(x_1)\|\geq \sqrt{\frac{\mu(\Tilde{x_2})}{n_2-1}}$. This yields,
 \begin{equation*}
 	f(x_1)\Delta f(x_1)\geq \lambda(x_1,\Tilde{x_2})f^2(x_1).
 \end{equation*}
 Hence $\Delta f>0$ so warping function $f$ is constant, as base manifold is compact. \\

 { (b)} By \eqref{1.4},
 
 	$$\operatorname{Scal}_{M_1}= n_1\lambda(x_1,x_2)-n_2\frac{\Delta f(x_1)}{f(x_1)},$$
  equivalently
 	$$ \operatorname{Scal}_{M_1}+n_2\lambda(x_1,x_2)=(n_1+n_2)\lambda(x_1,x_2)-n_2\frac{\Delta f(x_1)}{f(x_1)}.$$
Consequently,
 	$$\operatorname{Scal}_{M}-\operatorname{Scal}_{M_1}=n_2\left(\lambda(x_1,x_2)+\frac{\Delta f(x_1)}{f(x_1)}\right),$$
 
which implies
$$\dfrac{\Delta f(x_1)}{f}+ \lambda(x_1,x_2)\leq 0,$$ 
by using assumption (b).
Hence $\Delta f \leq 0$, so warping function $f$ is constant.\newline
 
 {(c)} By \eqref{1.5},
 \begin{align*}
 	\operatorname{Scal}_{M_2}=n_2\mu(x_2) \geq n_2 \lambda(x_1,x_2),
 \end{align*}
by availing the part of the supposition $(c)$ which is $\mu(x_2)\geq\lambda(x_1,x_2)$ for all $(x_1,x_2)\in M_1\times M_2$.
 
Now by using \eqref{1.4} and the above equation,

	$$\operatorname{Scal}_{M_1}+ \operatorname{Scal}_{M_2}\geq (n_1+n_2)\lambda(x_1,x_2)-n_2 \frac{\Delta f(x_1)}{f(x_1)},$$
 thus,
	$$\operatorname{Scal}_{M_1}+ \operatorname{Scal}_{M_2}\geq \operatorname{Scal}_M-n_2 \frac{\Delta f(x_1)}{f(x_1)},$$
 which gives,   
 $$n_2 \frac{\Delta f(x_1)}{f(x_1)}	\geq \operatorname{Scal}_M -(\operatorname{Scal}_{M_1}+\operatorname{Scal}_{M_2})\geq 0,$$
by hypothesis in $(c)$. 
Hence, $\Delta f\geq0$ so warping function $f$ is constant.\newline
 
 {(d)} By \eqref{1.6}, we have
 
 	$$\operatorname{Scal}_{M_1}+\dfrac{\operatorname{Scal}_{M_2}}{f(x_1)^2}=\operatorname{Scal}_M+ n_2(n_2 -1)\dfrac{\|\nabla f\|^2}{f(x_1)^2}-2n_2\frac{\Delta f(x_1)}{f(x_1)}.$$
  
  Hence,
  
 	$$\operatorname{Scal}_M-\left(\operatorname{Scal}_{M_1}+\dfrac{\operatorname{Scal}_{M_2}}{f(x_1)^2}\right)=2n_2\frac{\Delta f(x_1)}{f(x_1)}-n_2(n_2 -1)\dfrac{\|\nabla f\|^2}{f(x_1)^2}\geq 0,$$
  
 which employing hypothesis in (d), 
  
 	$$2n_2\frac{\Delta f(x_1)}{f(x_1)}\geq n_2(n_2 -1)\dfrac{\|\nabla f\|^2}{f(x_1)^2}\geq 0.$$ 

This yields, $\Delta f\geq 0$ so warping function $f$ is constant.\newline

{(e)} By \eqref{1.4}, 
\begin{align*}
	n_2\dfrac{\Delta f(x_1)}{f(x_1)}&= n_1\lambda(x_1,x_2)-\operatorname{Scal}_{M_1}\geq 0,
\end{align*}
by hypothesis in $(e)$.
That is, $\Delta f\geq 0$ so warping function $f$ is constant.\newline

{(f)}
By \eqref{1.4} we have,

		$$\operatorname{Scal}_{M_1} = n_1\lambda(x_1,x_2)- n_2\dfrac{\Delta f(x_1)}{f(x_1)},$$

  thus,
		$$\operatorname{Scal}_{M_1} f^2(x_1) = n_1\lambda(x_1,x_2)f^2(x_1)- n_2f(x_1)\Delta f(x_1).\\$$

Now by using \eqref{f10},
$$\operatorname{Scal}_{M_1}f^2(x_1)-n_1\mu(x_2)=(n_1-n_2)f(x_1)\Delta f(x_1)-n_1(n_2-1)\|\nabla f(x_1)\|^2\geq 0,$$
by hypothesis in $(f)$.
Finally, we affirm that 
$$(n_1-n_2)f(x_1)\Delta f(x_1) \geq n_1(n_2-1)\|\nabla f(x_1)\|^2\geq 0.$$

Here the following three cases occur.\\
{ (i)} $n_1 = n_2$, then clearly 
 $\|\nabla f(x_1)\| = 0,$  and the warping function $f$ must be constant
 as base manifold is connected. \newline
{ (ii)} $n_1 \geq n_2$, then
clearly, $\Delta f \geq  0,$ so warping function $f$ is constant.\newline
{ (iii)} $n_1 \leq n_2$, then 
 $\Delta f \leq 0 $ consequently, warping function $f$ is constant.
\end{proof}

\vspace{0.15in}

\noindent
{\bf \large Final Conclusions:} 
In summary, for the base $(M_1^{n_1},g_1)$  Riemannian and the fiber $(M_2^{n_2},F_2)$ a weakly Berwald Finsler manifold, 
 whether Einstein warped product $(M,F)$ is trivial or not
is depicted in the following tables.
Thus,
we have answered the Besse's question  partially in the Finslerian settings as follows. \\

(i) The base $(M_1^{n_1},g_1)$
is a complete, Riemannian manifold
of finite volume with warping function bounded.\\

\begin{tabular}{ |p{3.5cm}|p{3.5cm}|p{3.5cm}|p{3.5cm}|  }
 \hline
 Dimension of Warped Product&  Ricci scalar Function $\lambda$ of $M$ &  Ricci Scalar Function $\mu$ of $M_2$ & Triviality of $M$\\
 \hline
 Any & Non-positive Constant & Any Constant& Trivial\\
 \hline
 $n\geq 3$ (Randers)& Non-positive Scalar Function & Any Scalar Function & Trivial\\
 \hline
\end{tabular}

\vspace{0.15in}

 (ii) The base $(M_1^{n_1},g_1)$
is a compact Riemannian manifold.\\

\begin{tabular}{ |p{3.5cm}|p{3.5cm}|p{3.5cm}|p{3.5cm}|  }
 \hline
 Dimension of Warped product& Ricci Scalar Function $\lambda$ of $M$ & Ricci Scalar Function $\mu$ of $M_2$ & Triviality of $M$\\
 \hline
 \hline
 $1-3$   & Positive (or Negative) Scalar Function    &Any Scalar Function&   Trivial\\
 \hline
 $n_1,n_2\geq 2$& Positive Scalar Function & Any Scalar Function & Nontrivial\\
 \hline
 Any&   Non-negative Scalar Function  & Non-positive Scalar Function at One Point   &Trivial\\
\hline
 Any &Non-negative Scalar Function & Non-negative Scalar Function&  If (a)-(f) Holds of Theorem \ref{thm4.2}, then Trivial\\
 
 \hline
\end{tabular}

\vspace{0.4in}

\bmhead{Acknowledgements}

The authors gratefully acknowledge Professor Bankteshwar Tiwari for many helpful discussions. Mohammad Aqib gratefully acknowledges  Harish-Chandra Research Institute for its doctoral fellowship. 

\section*{Declarations}

\begin{itemize}
\item Funding: Not applicable.

\item Conflict of interest/Competing interests: The authors have no conflict of interest and no financial interests for this article.

\item Ethics approval: The submitted work is original and not submitted to more than one journal for simultaneous consideration.

\item Consent to participate: Not applicable.

\item Consent for publication: Not applicable.

\item Availability of data and materials: This manuscript has no associated data.

\item Code availability: Not applicable.

\item Authors' contributions: Conceptualization, methodology, investigation, validation, writing-original draft, review, editing, and reading have been performed by all the authors of the paper.

\end{itemize}

\end{document}